\newcommand{\R}{\mathbb{R}}
\global\long\def\R{\mathbb{R}}
\global\long\def\S{\mathbb{S}}
\global\long\def\E{\mathbb{E}}
\global\long\def\S{\mathcal{S}}
\global\long\def\Pr{\text{Pr}}
\global\long\def\Unif{\text{Unif}}
\newcommand{\vnote}[1]{\textcolor{red}{\small {\textbf{(Vishesh: }#1\textbf{) }}}}
\newtheorem{theorem}{Theorem}[section]
\newtheorem*{namedtheorem}{\theoremname}
\newcommand{\theoremname}{testing}
\newtheorem{lemma}[theorem]{Lemma}
\newtheorem{problem}[theorem]{Problem}
\newtheorem{proposition}[theorem]{Proposition}
\newtheorem{corollary}[theorem]{Corollary}
\newtheorem*{question*}{Question}
\theoremstyle{definition}
\newtheorem{definition}[theorem]{Definition}
\newtheorem{remark}[theorem]{Remark}
\theoremstyle{plain}
\title{1-factorizations of pseudorandom 
graphs}
\author{Asaf Ferber \thanks{Massachusetts Institute of Technology. Department of Mathematics. Email: {\tt ferbera@mit.edu}. Research is partially supported by an NSF grant 6935855.} \and Vishesh Jain\thanks{Massachusetts Institute of Technology. Department of Mathematics. Email: {\tt visheshj@mit.edu}} }
\date{}
\begin{document}
\maketitle

\begin{abstract} A $1$-factorization of a graph $G$ is a collection of edge-disjoint perfect matchings whose union is $E(G)$. A trivial necessary condition for $G$ to admit a $1$-factorization is that $|V(G)|$ is even and $G$ is regular; the converse is easily seen to be false. In this paper, we consider the problem of finding $1$-factorizations of regular, pseudorandom graphs. Specifically, we prove that an $(n,d,\lambda)$-graph $G$ (that is, a $d$-regular graph on $n$ vertices whose second largest eigenvalue in absolute value is at most $\lambda$) admits a $1$-factorization provided that $n$ is even, $C_0\leq d\leq n-1$ (where $C_0$ is a universal constant), and $\lambda\leq d^{0.9}$. In particular, since (as is well known) a typical random $d$-regular graph $G_{n,d}$ is such a graph, we obtain the existence of a $1$-factorization in a typical $G_{n,d}$ for all $C_0\leq d\leq n-1$, thereby extending to all possible values of $d$ results obtained by Janson, and independently by Molloy, Robalewska, Robinson, and Wormald for fixed $d$. Moreover, we also obtain a lower bound for the number of distinct $1$-factorizations of such graphs $G$ which is off by a factor of $2$ in the base of the exponent from the known upper bound. This lower bound is better by a factor of $2^{nd/2}$ than the previously best known lower bounds, even in the simplest case where $G$ is the complete graph. Our proofs are probabilistic and can be easily turned into polynomial time (randomized) algorithms.  

\end{abstract}

\section{Introduction}

The \emph{chromatic index} of a graph $G$, denoted by $\chi'(G)$, is the minimum number of colors 
with which it is possible to color the edges of $G$ in a way such that every color class consists of a matching (that is, no two edges of the same color share a vertex). This parameter is one of the most fundamental and widely studied 
parameters in graph theory and combinatorial optimization and in particular, is related to optimal scheduling and resource allocation problems and round-robin tournaments (see, e.g., \cite{GDP}, \cite{Lynch}, \cite{MR}). 

A trivial lower bound on $\chi'(G)$ 
is $\chi'(G)\geq \Delta(G)$, where $\Delta(G)$ denotes the maximum degree of $G$. Indeed, consider any vertex with maximum degree, and observe that all edges incident to this vertex must have distinct colors. 
Perhaps surprisingly, a classical theorem of Vizing \cite{Vizing} from the 1960s shows that $\Delta + 1$ colors are always sufficient, and therefore, $\chi'(G)\in \{\Delta(G),\Delta(G)+1\}$ holds for all graphs. In particular, this shows that one can partition all graphs into two classes: \emph{Class 1} consists of all graphs $G$ for which $\chi'(G)=\Delta(G)$, and \emph{Class 2} consists of all graphs $G$ for which $\chi'(G)=\Delta(G) + 1$. Moreover, the strategy in Vizing's original proof can be used to obtain a polynomial time algorithm to edge color any graph $G$ with $\Delta(G) + 1$ colors (\cite{MG}).
However, Holyer \cite{Hoy} showed that it is actually NP-hard to decide whether a given graph $G$ is in Class 1 or 2. In fact, Leven and Galil \cite{Leven} showed that this is true even if we restrict ourselves to graphs with all the degrees being the same (that is, to \emph{regular graphs}).

Note that for $d$-regular graphs $G$ (that is, graphs with all their degrees equal to $d$) on an even number of vertices, the statement `$G$ is of Class 1' is equivalent to the statement that $G$ contains $d$ edge-disjoint \emph{perfect matchings} (also known as \emph{$1$-factors}). A graph whose edge set decomposes as a disjoint union of perfect matchings is said to admit a \emph{$1$-factorization}.
Note that if $G$ is a $d$-regular \emph{bipartite} graph, then a straightforward application of Hall's marriage theorem immediately shows that $G$ is of Class 1. Unfortunately, the problem is much harder for non-bipartite graphs, and it is already very interesting to find (efficiently verifiable) sufficient conditions 
which ensure that $\chi'(G)=\Delta(G)$. This problem is the main focus of our paper.

\subsection{Regular expanders are of Class 1}

Our main result shows that $d$-regular graphs on an even number of vertices which are `sufficiently good' spectral expanders, are of Class 1. Before stating our result precisely, we need to introduce some notation and definitions. Given a $d$-regular graph $G$ on $n$ vertices, let $A(G)$ be its adjacency matrix (that is, $A(G)$ is an $n\times n$, $0/1$-valued matrix, with $A(G)_{ij}=1$ if and only if $ij\in E(G)$). Clearly, $A(G)\cdot \textbf{1}=d\textbf{1}$, where $\textbf{1}\in \mathbb{R}^n$ is the vector with all entries  equal to $1$, and therefore, $d$ is an eigenvalue of $A(G)$. In fact, as can be easily proven, $d$ is the eigenvalue of $A(G)$ with largest absolute value. Moreover, since $A(G)$ is a symmetric, real-valued matrix, it has $n$ real eigenvalues (counted with multiplicities). Let 
$$d:=\lambda_1\geq \lambda_2\geq \ldots \lambda_n \geq -d$$ denote the eigenvalues of $A(G)$, and let $\lambda(G):=\max\{|\lambda_2|,|\lambda_n|\}$. With this notation, we say that $G$ is an $(n,d,\lambda)$-graph if $G$ is a $d$-regular graph on $n$ vertices with $\lambda(G)\leq \lambda$. In recent decades, the study of $(n,d,\lambda)$ graphs, also known as `spectral expanders', has attracted considerable attention in mathematics and theoretical computer science. An example which is relevant to our problem is that of finding a perfect matching in $(n,d,\lambda)$-graphs. Extending a result of Krivelevich and Sudakov \cite{KS}, Ciob\v{a}, Gregory and Haemers \cite{CGH} proved accurate spectral conditions for an $(n,d,\lambda)$-graph to contain a perfect matching. For much more on these graphs and their many applications, we refer the reader to the surveys of Hoory, Linial and Wigderson \cite{HLW}, Krivelevich and Sudakov \cite{KS}, and to the book of Brouwer and Haemers \cite{BH}. We are now ready to state our main result.   

\begin{theorem}
\label{main:pseudorandom}
For every $\varepsilon>0$ there exist $d_0,n_0 \in \mathbb{N}$ such that for all even integers $n\geq n_0$ and for all $d \geq d_0$ the following holds. Suppose that $G$ is an $(n,d,\lambda)$-graph with $\lambda\leq d^{1-\varepsilon}$. Then, $\chi'(G)=d$. 
\end{theorem}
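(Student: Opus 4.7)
The plan is to follow the now-standard two-phase \emph{absorbing} paradigm: first set aside a sparse random ``reserve'' subgraph $R\subseteq G$ with strong quasirandom properties, then iteratively extract perfect matchings from $G\setminus R$ until the remainder is very sparse, and finally combine the leftover with $R$ via switchings to form the last few perfect matchings of the 1-factorization. The reservation step is carried out by taking $R$ to be a random $d_R$-regular subgraph of $G$ with $d_R = d^{1-\varepsilon/2}$, chosen so that $R$ has a discrepancy property (for every disjoint $A,B\subseteq V(G)$, the number of $R$-edges from $A$ to $B$ is close to $d_R|A||B|/n$) and so that $G':=G\setminus R$ is a nearly $(d-d_R)$-regular pseudorandom graph whose second eigenvalue is still $O(d^{1-\varepsilon})$. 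Existence of such an $R$ follows from a standard probabilistic construction together with an eigenvalue-perturbation/interlacing argument leveraging the spectral hypothesis on $G$.

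In the main iteration, suppose that after $i$ matchings have been removed we have a graph $G_i\subseteq G'$ which is $(n,d-d_R-i,\lambda_i)$-pseudorandom with $\lambda_i$ still substantially smaller than its degree. By the spectral perfect matching criteria for $(n,d,\lambda)$-graphs (Krivelevich--Sudakov, Ciob\v{a}--Gregory--Haemers), $G_i$ admits many perfect matchings; we select one, say $M_{i+1}$, via a random procedure (for instance using a switching-based Markov chain or by orienting $G_i$ randomly and extracting a matching in the spirit of known pseudorandom-graph arguments) that produces a near-uniform sample among perfect matchings of $G_i$. A concentration inequality (martingale or Talagrand) then shows that, with high probability, $M_{i+1}$ is edge-regular enough that $G_{i+1}:=G_i\setminus M_{i+1}$ inherits essentially the same discrepancy-type bounds, hence remains pseudorandom. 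Iterating roughly $d-d_R-d_{\mathrm{end}}$ times, one terminates at a graph $L$ of tiny degree $d_{\mathrm{end}}=d^{o(1)}$.

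In the absorption phase, we 1-factorize $R\cup L$. Since $R$ alone is a pseudorandom regular graph of degree $d_R\gg d_{\mathrm{end}}$, we can first apply the same iterative procedure to decompose $R$ into perfect matchings $N_1,\dots,N_{d_R}$. Each remaining edge of $L$ (of which there are at most $nd_{\mathrm{end}}/2$) is then absorbed into one of the $N_j$'s by a local switch: using the discrepancy of $R$, we find a short augmenting alternating cycle through the edge to be absorbed, and swap it into $N_j$ while preserving the matching property. Expansion ensures that this can be done for all leftover edges simultaneously, for example by setting up a bipartite auxiliary graph between leftover edges and available switches and verifying Hall's condition via the edge-discrepancy of $R$.

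The main obstacle is the iterative phase: one must show that the pseudorandom parameter (either the second eigenvalue or a combinatorial surrogate such as edge-discrepancy) degrades gracefully through $\Theta(d)$ successive random matching removals, and that this degradation stays comfortably below the threshold needed for the next matching to exist. This requires a delicate balance between the distribution from which each matching is sampled (uniform enough for concentration to apply) and the quantitative bounds on spectral perturbations. A secondary technical point is the absorption step, where one must verify Hall-type conditions for the switching graph using the discrepancy of $R$ rather than its spectrum directly, since in the endgame we no longer have eigenvalue control of the tiny leftover $L$.
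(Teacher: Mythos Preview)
Your proposal has two genuine gaps, one in each phase.

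\textbf{Iterative phase.} You correctly identify the main obstacle and then leave it open. Removing a single perfect matching perturbs the second eigenvalue by at most~$1$ (interlacing), but that is useless after $\Theta(d)$ removals. To do better you would need the removed matching to be ``typical'' in a quantitative sense strong enough that, say, a discrepancy parameter only drifts by $o(1/d)$ per step with high probability. No mechanism for sampling such a matching is given (the suggestions ``switching-based Markov chain'' or ``random orientation'' are not known to produce the required near-uniformity with usable error bounds in $(n,d,\lambda)$-graphs for general $d$), and no martingale is set up. Without this, you cannot guarantee that $G_i$ still has a perfect matching once $i$ is close to $d-d_R$.

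\textbf{Absorption phase.} As written this step is both circular and non-reducing. You propose to $1$-factorize $R$ by ``the same iterative procedure'', but that procedure, applied to $G$, terminates with a leftover $L$; applied to $R$ it will terminate with a leftover as well, and you have not explained why $R$ should behave better. More seriously, even granting a $1$-factorization $R=N_1\cup\dots\cup N_{d_R}$, switching along an alternating cycle moves some edges of $L$ into $N_j$ but kicks the same number of edges of $N_j$ out; the number of edges not covered by $\{N_j\}$ is unchanged. Since $R\cup L$ is $(d_R+d_{\mathrm{end}})$-regular it needs $d_R+d_{\mathrm{end}}$ perfect matchings, and your switching produces no new ones.

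For comparison, the paper sidesteps both difficulties by never iterating $\Theta(d)$ times. It randomly partitions $E(G)$ into $t=d^{o(1)}$ balanced bipartite pieces $H_1,\dots,H_t$, extracts a large regular factor $W_i$ from each (a one-shot application of a Gale--Ryser type criterion, no iteration), and uses Petersen's $2$-factor theorem to spread the remaining $O(d/t^{1/3})$-regular leftover evenly as $G'_1,\dots,G'_t$. The absorption is then done \emph{inside each $R_i:=W_i\cup G'_i$}: one Vizing-colours the tiny non-bipartite parts $R_i[A_i]$ and $R_i[B_i]$, pairs up matchings of equal size in $A_i$ and $B_i$, and completes each pair to a perfect matching of $R_i$ using the ``goodness'' of $W_i$. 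What remains after these completions is a genuinely regular balanced bipartite graph, which $1$-factorizes by Hall. The point is that the reduction to bipartite structure, rather than iterated spectral control, is what makes the endgame clean.
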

\begin{remark}
It seems plausible that with a more careful analysis of our proof, one can improve our bound to $\lambda\leq d/poly(\log{d})$. Since we believe that the actual bound should be much stronger, we did not see any reason to optimize our bound at the expense of making the paper more technical.   
\end{remark}

In particular, since the eigenvalues of a matrix can be computed in polynomial time, \cref{main:pseudorandom} provides a polynomial time checkable sufficient condition for a graph to be of Class 1. Moreover, our proof gives a probabilistic polynomial time algorithm to actually find an edge coloring of such a $G$ with $d$ colors. Our result can be viewed as implying that `sufficiently good' spectral expanders are easy instances for the NP-complete problem of determining the chromatic index of regular graphs. It is interesting (although, perhaps a bit unrelated) to note that in recent work, Arora et al. \cite{Arora} showed that constraint graphs which are reasonably good spectral expanders are easy for the conjecturally NP-complete Unique Games problem as well. 


\subsection{Almost all $d$-regular graphs are of Class 1}

The phrase `almost all $d$-regular graphs' usually splits into two cases: `dense' graphs and random graphs. Let us start with the former. \\

{\bf Dense graphs: }It is well known (and quite simple to prove) that every $d$-regular graph $G$ on $n$ vertices, with $d\geq 2\lceil n/4\rceil-1$ has a perfect matching (assuming, of course, that $n$ is even). Moreover, for every $d\leq 2\lceil n/4\rceil-2$, it is easily seen that there exist $d$-regular graphs on an even number of vertices that do not contain even one perfect matching.
In a (relatively) recent breakthrough, Csaba, K\"uhn, Lo, Osthus, and Treglown \cite{CKLOT} proved a longstanding conjecture of Dirac from the 1950s, and showed that the above minimum degree condition is tight, not just for containing a single perfect matching, but also for admitting a $1$-factorization. 

\begin{theorem} [Theorem 1.1.1 in \cite{CKLOT}]
\label{theorem: long}
Let $n$ be a sufficiently large even integer, and let $d\geq 2\lceil n/4\rceil-1$. Then, every $d$-regular graph $G$ on $n$ vertices contains a $1$-factorization.
\end{theorem}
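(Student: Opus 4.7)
The plan is to follow the dichotomy-plus-absorption strategy that is standard for tight extremal results of this flavour (and indeed underlies the CKLOT proof). First I would establish a \emph{stability/dichotomy step}: show that any $d$-regular graph $G$ on $n$ vertices with $d \geq 2\lceil n/4\rceil - 1$ falls into one of two cases. Either (i) $G$ is a \emph{robust (outer) expander} in the Kühn--Osthus sense, meaning that for every $S \subseteq V(G)$ with $|S|$ in a suitable window, the robust neighbourhood $RN_{\nu,\tau}(S)$ satisfies $|RN_{\nu,\tau}(S)| \geq |S| + \nu n$; or (ii) $G$ is \emph{extremal}, i.e.\ close in edit distance to one of a short list of tight examples — essentially the disjoint union of two $K_{n/2}$'s (or one of its natural variants when $n \equiv 2 \pmod 4$, where one has a single matching across the bipartition). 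The proof of this dichotomy is a pigeonhole/counting argument on vertex subsets witnessing a failure of expansion, converting non-expansion into explicit structural closeness to an extremal configuration.

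Second, I would dispatch the \emph{non-extremal case} by invoking the theory of Hamilton decompositions of robust expanders. Concretely: a result of Kühn--Osthus shows that any sufficiently dense Dirac-type robust expander admits a Hamilton decomposition. Splitting each Hamilton cycle into its two alternating perfect matchings immediately yields a $1$-factorization when $d$ is even. When $d$ is odd, I would first extract one perfect matching $M$ (which exists by Tutte's theorem applied to the expansion, or directly by Hall-type arguments in the robust expander), verify that $G \setminus M$ remains a robust expander of degree $d-1$, and then apply the Hamilton decomposition result to $G \setminus M$. The verification that removing a carefully chosen matching preserves robust expansion is the only delicate input here, and it follows from the fact that the expansion parameters are robust to small edge deletions.

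Third, I would handle the \emph{extremal case} by hand. Here $G$ is approximately $K_{n/2} \sqcup K_{n/2}$ (say with vertex classes $A$ and $B$ of sizes close to $n/2$), with a bounded-degree ``defect'' and a bounded number of crossing edges. The strategy is to (a) redistribute vertices/edges so that the two sides become cliques on exactly $n/2$ vertices with a controlled set of cross-edges; (b) use the cross-edges to build a small number of perfect matchings of $G$ which together cover all crossing edges and absorb the degree defects; then (c) the leftover graph splits as an edge-disjoint union of two regular graphs on $A$ and $B$ of appropriate even/odd degree, and one inductively (or directly, using the 1-factorization of $K_{n/2}$ when $n/2$ is even, or Walecki's construction together with a transversal trick when $n/2$ is odd) completes the decomposition. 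The careful bookkeeping of parities of $n/4$ and of the degree defects is what requires the most effort.

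The main obstacle, by a wide margin, is the extremal case: near-equality in Dirac-type thresholds leaves essentially no slack, so one cannot use probabilistic absorption, and every edge across the would-be bipartition must be placed into one of the $d$ matchings in a globally consistent way while also covering the correct number of edges inside each side. Orchestrating this requires a multi-stage constructive scheme, keeping track of parities and small degree defects simultaneously, and this is precisely where the bulk of the $170$-page argument of \cite{CKLOT} resides. The non-extremal case, by contrast, is relatively clean once the robust-expander Hamilton decomposition machinery is available as a black box.
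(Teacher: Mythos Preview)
The paper does not prove this statement; \cref{theorem: long} is merely quoted as a known result from \cite{CKLOT} to contextualise the dense case, and no argument for it appears anywhere in the present paper. So there is nothing to compare your proposal against here.

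That said, your outline does accurately summarise the actual strategy of \cite{CKLOT}: the stability dichotomy into robust expanders versus near-extremal configurations, the K\"uhn--Osthus Hamilton decomposition machinery in the non-extremal case, and the delicate hand-analysis of the extremal configurations. Your assessment that the extremal case carries the bulk of the difficulty is also correct. But be aware that what you have written is a high-level roadmap, not a proof: each of your steps (especially ``handle the extremal case by hand'' and ``redistribute vertices/edges so that the two sides become cliques'') conceals substantial technical work, and nothing in the present paper supplies any of it.
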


Hence, every sufficiently `dense' regular graph is of Class 1. It is worth mentioning that they actually proved a much more general statement about finding edge-disjoint \emph{Hamilton cycles}, from which the above theorem follows as a corollary.\\ 

\textbf{Random graphs: }As noted above, one cannot obtain a statement like \cref{theorem: long} for smaller values of $d$ since the graph might not even have a single perfect matching. 
Therefore, a natural candidate to consider for such values of $d$ is the random $d$-regular graph, denoted by $G_{n,d}$, which is simply 
a random variable that outputs a $d$-regular graph on $n$ vertices, chosen uniformly at random from among all such graphs. The study of this random graph model has received much interest in recent years. Unlike the traditional binomial random graph $G_{n,p}$ (where each edge of the complete graph is included independently, with probability $p$), the uniform regular model has many dependencies, and is therefore much harder to work with. For a detailed discussion of this model, along with many results and open problems, we refer the reader to the survey of Wormald \cite{Wormald}. 

Working with this model, Janson \cite{Jans}, and independently, Molloy, Robalewska, Robinson, and Wormald \cite{M}, proved that a typical $G_{n,d}$ admits a $1$-factorization for all fixed $d\geq 3$, where $n$ is a sufficiently large (depending on $d$) even integer. Later, Kim and Wormald \cite{KW} gave a randomized algorithm to decompose a typical $G_{n,d}$ into $\lfloor \frac{d}{2}\rfloor$ edge-disjoint \emph{Hamilton cycles} (and an additional perfect matching if $d$ is odd) under the same assumption that $d \geq 3$ is fixed, and $n$ is a sufficiently large (depending on $d$) even integer. The main problem with handling values of $d$ which grow with $n$ is that the so-called `configuration model' (see \cite{Bol} for more details) does not help us in this regime.

Here, as an almost immediate corollary of \cref{main:pseudorandom}, we deduce the following, which together with the results of \cite{Jans} and \cite{M} shows that a typical $G_{n,d}$ on a sufficiently large even number of vertices admits a $1$-factorization for all $3 \leq d \leq n-1$. 

\begin{corollary}
\label{main}
There exists a universal constant $d_0\in \mathbb{N}$ such that for all $d_0\leq d\leq n-1$, a random $d$-regular graph $G_{n,d}$ admits a $1$-factorization asymptotically almost surely (a.a.s.). 
\end{corollary}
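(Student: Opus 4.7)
The plan is to reduce the corollary to Theorem \ref{main:pseudorandom} by showing that $G_{n,d}$ is a.a.s.\ a sufficiently good spectral expander, and to handle the very dense regime by invoking the Csaba--K\"uhn--Lo--Osthus--Treglown theorem (\cref{theorem: long}) as a black box. Concretely, I would fix $\varepsilon=1/10$ in \cref{main:pseudorandom} to get a universal constant $d_0$ (which I will enlarge below if needed), and split the argument into two regimes depending on $d$.

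For the dense regime $d \geq 2\lceil n/4\rceil - 1$, \cref{theorem: long} is deterministic: \emph{every} $d$-regular graph on $n$ vertices (for $n$ large enough) admits a $1$-factorization, so in particular $G_{n,d}$ does, with probability $1$. For the sparse/moderate regime $d_0 \leq d \leq 2\lceil n/4\rceil - 2$, the task reduces to proving that a.a.s.\ $G_{n,d}$ is an $(n,d,\lambda)$-graph with $\lambda \leq d^{0.9}$, after which \cref{main:pseudorandom} immediately gives $\chi'(G_{n,d}) = d$, i.e., a $1$-factorization.

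The eigenvalue bound needed here is weaker than what is available in the literature. For fixed $d \geq d_0$, Friedman's theorem yields $\lambda(G_{n,d}) \leq 2\sqrt{d-1} + o(1)$ a.a.s., which is comfortably below $d^{0.9}$. For $d$ that grows with $n$, the classical bound of Broder, Frieze, Suen and Upfal, together with the more recent improvements of Cook--Goldstein--Johnson and Tikhomirov--Youssef, shows that $\lambda(G_{n,d}) = O(\sqrt{d})$ a.a.s.\ throughout the range $d_0 \leq d \leq n/2$. Taking $d_0$ large enough that $C\sqrt{d} \leq d^{0.9}$ for all $d \geq d_0$ (where $C$ is the implicit constant in the cited eigenvalue bounds), and at least as large as the $d_0$ supplied by \cref{main:pseudorandom} with $\varepsilon = 1/10$, completes this regime.

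There is essentially no obstacle beyond choosing the right references: the only subtlety is making sure that the cited spectral estimates for $G_{n,d}$ cover the whole range $d_0 \leq d \leq n/2$ uniformly, which is indeed covered by the combination of results mentioned above. Everything else is packaging: take the maximum of the constants produced by \cref{main:pseudorandom}, by the spectral bounds for $G_{n,d}$, and by \cref{theorem: long}, and invoke the appropriate statement in each of the two regimes.
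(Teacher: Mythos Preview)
Your proposal is correct and follows essentially the same approach as the paper: reduce to \cref{main:pseudorandom} via the known spectral gap bounds $\lambda(G_{n,d}) = O(\sqrt{d})$ a.a.s.\ for random regular graphs. The only difference is that the paper does not split off the dense regime via \cref{theorem: long}; since the Tikhomirov--Youssef result already covers all $d \leq n-1$, \cref{main:pseudorandom} alone handles the entire range, so your case split is unnecessary (though not incorrect).
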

\begin{remark} 
By asymptotically almost surely, we mean with probability going to $1$ as $n$ goes to infinity (through even integers). Since a $1$-factorization can never exist when $n$ is odd, we will henceforth always assume that $n$ is even, even if we do not explicitly state it.  
\end{remark}

To deduce \cref{main} from \cref{main:pseudorandom}, it suffices to show that we have (say) $\lambda(G_{n,d}) = O(d^{0.9})$ a.a.s. In fact, the considerably stronger (and optimal up to the choice of constant in the big-oh) bound that $\lambda(G_{n,d}) = O(\sqrt{d})$ a.a.s. is known. For $d = o(\sqrt{n})$, this is due to Broder, Frieze, Suen and Upfal \cite{BFSU}. This result was extended to the range $d=O(n^{2/3})$ by Cook, Goldstein, and Johnson \cite{CGJ} and to all values of $d$ by Tikhomirov and Youssef \cite{T}. We emphasize that the condition on $\lambda$ we require is significantly weaker and can possibly be deduced from much simpler arguments than the ones in the references above.\\

It is also worth mentioning that very recently, Haxell, Krivelevich and Kronenberg \cite{HKK} studied a related problem in a random multigraph setting; it is interesting to check whether our techniques can be applied there as well.

\subsection{Counting $1$-factorizations}
Once the existence of $1$-factorizations in a family of graphs has been established, it is natural to ask for the number of \emph{distinct} $1$-factorizations that any member of such a family admits. Having a `good' approximation to the number of $1$-factorizations can shed some light on, for example, properties of a `randomly selected' $1$-factorization. We remark that the case of counting the number of $1$-factors (perfect matchings), even for bipartite graphs, has been the subject of fundamental works over the years, both in combinatorics (e.g., \cite{Bregman}, \cite{Egorychev}, \cite{Falikman}, \cite{Schrijver}), as well as in theoretical computer science (e.g., \cite{Valiant}, \cite{JSV}), and had led to many interesting results such as both closed-form as well as computational approximation results for the permanent of $0/1$ matrices.  

As far as the question of counting the number of $1$-factorizations is concerned, much less is known. Note that for $d$-regular bipartite graphs, one can use estimates on the permanent of the adjacency matrix of $G$ to obtain quite tight results. But quite embarrassingly, for non-bipartite graphs (even for the complete graph!) the number of $1$-factorizations in unknown. The best known upper bound for the number of $1$-factorizations in the complete graph is due to Linial and Luria \cite{LL}, who showed that it is upper bounded by
$$\left((1+o(1))\frac{n}{e^2}\right)^{n^2/2}.$$ 
Moreover, by following their argument verbatim, one can easily 
show that the number of 1-factorizations of any $d$-regular graph is at most
$$\left((1+o(1))\frac{d}{e^2}\right)^{dn/2}.$$ 
On the other hand, the previously best known lower bound for the number of $1$-factorizations of the complete graph (\cite{Cameron}, \cite{Zinoviev}) is only 
$$\left((1+o(1))\frac{n}{4e^2}\right)^{n^{2}/2},$$
which is off by a factor of $4^{n^2/2}$ from the upper bound.

An immediate advantage of our proof is that it gives a lower bound on the number of $1$-factorizations which is better than the one above by a factor of $2$ in the base of the exponent, not just for the complete graph, but for all sufficiently good regular spectral expanders with degree greater than some large constant. More precisely, we will show the following (see also the third bullet in \cref{section-concluding-rmks}) 

\begin{theorem}
\label{main:counting} For any $\epsilon >0$, there exist $D=D(\epsilon), N=N(\epsilon) \in \mathbb{N}$ such that for all even integers $n\geq N(\epsilon)$ and for all $d\geq D(\epsilon)$, the number of $1$-factorizations in any $(n,d,\lambda)$-graph with $\lambda \leq d^{0.9}$ is at least 
$$\left((1-\epsilon)\frac{d}{2e^2}\right)^{dn/2}.$$
\end{theorem}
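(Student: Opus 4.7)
The plan is to read the counting bound directly off the iterative construction used in the proof of \cref{main:pseudorandom}, which produces the $1$-factorization as an ordered sequence $(M_1,\ldots,M_d)$ of edge-disjoint perfect matchings. Any lower bound on the number of choices available at each step, combined with dividing by $d!$ to account for reordering, yields a lower bound on the number of $1$-factorizations.

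First, I would prove an auxiliary counting lemma: every $(n,d',\lambda')$-graph $H$ with $\lambda' \le (d')^{1-\eta}$ contains at least $(1-o(1))^{n}\bigl(d'/(2e)\bigr)^{n/2}$ perfect matchings. The argument picks a uniformly random balanced bipartition $V(H) = A \sqcup B$; the expander mixing lemma then shows every vertex has $(1 \pm o(1))d'/2$ neighbours on the opposite side, so after greedily deleting a few edges one obtains a $\lfloor(1-o(1))d'/2\rfloor$-regular spanning bipartite subgraph of $H[A,B]$. Schrijver's permanent lower bound applied to this regular bipartite subgraph produces at least $\bigl((1-o(1))d'/(2e)\bigr)^{n/2}$ perfect matchings, each of which is also a perfect matching of $H$.

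Second, I would apply this lemma iteratively along the construction from \cref{main:pseudorandom}. That construction should maintain the invariant that $G_i := G \setminus (M_1 \cup \cdots \cup M_{i-1})$ is an $(n, d-i+1, \lambda_i)$-graph with $\lambda_i \le (d-i+1)^{1-\eta}$, at least so long as $d-i+1$ remains above some large constant threshold. For all such $i$, the auxiliary lemma gives at least $\bigl((d-i+1)/(2e)\bigr)^{n/2}(1-o(1))^{n}$ perfect matchings in $G_i$, of which essentially all preserve the pseudorandomness invariant (a short concentration/second-moment argument on how spectra change under removing a random matching, mirroring the step already present in the existence proof). The handful of final steps, when $d-i+1$ drops below the threshold, contribute only a subexponential factor that is absorbed into $(1-\epsilon)$.

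Multiplying the step-wise lower bounds, the number of ordered sequences $(M_1,\ldots,M_d)$ of edge-disjoint perfect matchings whose union is $E(G)$ is at least
\[
\prod_{i=1}^{d}\left(\frac{i}{2e}\right)^{\!n/2}(1-o(1))^{n} \;=\; \frac{(d!)^{n/2}}{(2e)^{dn/2}}\,(1-o(1))^{dn}.
\]
Dividing by $d!$ to convert to unordered $1$-factorizations and applying Stirling's approximation $d! = (1+o(1))\sqrt{2\pi d}(d/e)^{d}$ yields the claimed lower bound $\bigl((1-\epsilon)d/(2e^{2})\bigr)^{dn/2}$; the $1/d!$ correction contributes only $\exp(-O(d\log d))$, which is subexponential relative to the main term $\exp(\Theta(dn\log d))$ and is absorbed into the $(1-\epsilon)$ factor once $n$ is large.

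The principal obstacle is the coupling between counting and pseudorandomness preservation: to keep iterating, one needs not merely many perfect matchings in $G_i$ but many whose removal leaves $G_{i+1}$ sufficiently pseudorandom for the next round. Since the proof of \cref{main:pseudorandom} is already engineered to supply such a stock of matchings at each step, the counting lower bound is essentially a bookkeeping corollary of that construction; the most delicate quantitative point is verifying that Schrijver's bound survives the small deviations of $H[A,B]$ from exact $(d'/2)$-regularity, which is handled by passing to a regular bipartite sub-subgraph before invoking Schrijver.
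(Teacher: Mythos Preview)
Your plan rests on a misreading of how the proof of \cref{main:pseudorandom} actually works. That proof does \emph{not} peel off perfect matchings one at a time while maintaining that the leftover $G_i$ is an $(n,d-i,\lambda_i)$-graph with $\lambda_i\le (d-i)^{1-\eta}$; there is no ``step already present in the existence proof'' that controls the spectrum after removing a matching. Instead, the existence proof randomly colours the edges with $t\approx d^{c}$ colours to obtain edge-disjoint balanced bipartite graphs $H_1,\dots,H_t$, extracts a large regular bipartite factor $W_i$ from each $H_i$, distributes the regular leftover $G'=G\setminus\bigcup W_i$ among the pieces via Petersen's theorem, and then absorbs each $G'_i$ into $W_i$ by the completion lemma. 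At no point is spectral pseudorandomness of an intermediate $(d-i)$-regular graph invoked or established. Consequently the coupling you flag as the ``principal obstacle'' is not handled by the existence proof, and your iterative scheme has no foundation to stand on. Independently of the paper, the obstacle is real: removing an arbitrary perfect matching can raise $\lambda$ by $1$ (Weyl), so after $\Theta(d)$ removals one could have $\lambda_i$ of order $d$ while the remaining degree is also of order $d$, destroying the hypothesis you need; showing that a \emph{typical} matching does better would require a genuinely new concentration argument that you have not supplied.

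The paper's counting argument is structurally different from what you propose. It lower-bounds the number of labelled edge-partitions $E(G)=E_1\cup\dots\cup E_t$ produced by the random colouring (\cref{rmk:counting}), and for each such partition uses Schrijver's theorem (\cref{Schrijver-lower-bound-1-fact}) on the $\ge (r_1-r_2)$-regular balanced bipartite graph left after the completion step to lower-bound the number of $1$-factorizations of each $R_i$ (\cref{rmk:counting-completion}). Multiplying gives $\bigl((1-o(1))d/(2e^2)\bigr)^{nd/2}$ ordered outcomes; one then argues that any fixed $1$-factorization $\mathcal F$ is produced by at most $(1+o(1))^{nd/2}$ of these partitions, because all but $O(d/t^{1/3})$ of the matchings in $\mathcal F$ must lie entirely within a single colour class. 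The point is that the expansion is used \emph{once}, up front, to guarantee many good partitions, and thereafter the counting takes place inside regular bipartite graphs where Schrijver applies directly---no iterative spectral control is needed.
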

\begin{remark} As discussed before, this immediately implies that for all $d\geq D(\epsilon)$, the number of $1$-factorizations of $G_{n,d}$ is a.a.s. 
$$\left((1-\epsilon)\frac{d}{2e^2}\right)^{dn/2}.$$
\end{remark}

\subsection{Outline of the proof}
It is well known, and easily deduced from Hall's theorem, that any regular bipartite graph admits a $1$-factorization (\cref{Hall-application-1fact-reg-bipartite}). Therefore, if we had a decomposition $E(G)=E(H'_1)\cup\dots E(H'_t)\cup E(\mathcal{F})$, where $H'_1,\dots H'_t$ are regular balanced bipartite graphs, and $\mathcal{F}$ is a $1$-factorization of the regular graph $G\setminus\bigcup_{i=1}^{t}H'_i$, we would be done. Our proof of \cref{main:pseudorandom} will obtain such a decomposition constructively. 

As shown in \cref{existence-good-graph}, one can find a collection of edge disjoint, regular bipartite graphs $H_1,\dots,H_t$, where $t\ll d$ and each $H_i$ is $r_i$ regular, with $r_i\approx d/t$, which covers `almost' all of $G$. In particular, one can find an `almost' $1$-factorization of $G$. However, it is not clear how to complete an arbitrary such `almost' $1$-factorization to an actual $1$-factorization of $G$. To circumvent this difficulty, we will adopt the following strategy. Note that $G':=G\setminus\bigcup_{i=1}^{t}H_i$ is a $k$-regular graph with $k\ll d$, and we can further force $k$ to be even (for instance, by removing a perfect matching from $H_1$). Therefore, by Petersen's $2$-factor theorem (\cref{2-factor theorem}), we  easily obtain a decomposition $E(G')=E(G'_1)\cup\dots E(G'_t)$, where each $G'_i$ is approximately $k/t$ regular. The key ingredient of our proof (\cref{proposition-completion}) then shows that the $H_i$'s can initially be chosen in such a way that each $R_i:=H_i\cup G'_i$ can be edge decomposed into a regular balanced bipartite graph, and a relatively small number of $1$-factors.

The basic idea in this step is quite simple. Observe that while the regular graph $R_i$ is not bipartite, it is `close' to being one, in the sense that most of its edges come from the regular balanced bipartite graph $H_i=(A_i\cup B_i, E_i)$. Let $R_i[A_i]$ denote the graph induced by $R_i$ on the vertex set $A_i$, and similarly for $B_i$, and note that the number of edges $e(R_i[A_i])=e(R_i[B_i])$. We will show that $H_i$ can be taken to have a certain `goodness' property (\cref{defn-good-graph}) which, along with the sparsity of $G'_i$, enables one to perform the following process to `absorb' the edges in $R_i[A_i]$ and $R_i[B_i]$: decompose $R_i[A_i]$ and $R_i[B_i]$ into the same number of matchings, with corresponding matchings of equal size, and complete each such pair of matchings to a perfect matching of $R_i$. After removing all the perfect matchings of $R_i$ obtained in this manner, we are clearly left with a regular balanced bipartite graph, as desired.     


Finally, for the lower bound on the number of $1$-factorizations, we show that there are many ways of performing such an edge decomposition $E(G)=E(H'_1)\cup\dots\cup E(H'_t)\cup E(\mathcal{F})$ (\cref{rmk:counting}), and there are many $1$-factorizations corresponding to each choice of edge decomposition (\cref{rmk:counting-completion}). 
\section{Tools and auxiliary results}
In this section we have collected a number of tools and auxiliary results to be used in proving our main theorem.

\subsection{Probabilistic tools}
	
Throughout the paper, we will make extensive use of the following well-known concentration inequality due to Hoeffding (\cite{Hoeff}).
\begin{lemma}[Hoeffding's inequality]
\label{Hoeffding}
Let $X_{1},\dots,X_{n}$ be independent random variables such that
$a_{i}\leq X_{i}\leq b_{i}$ with probability one. If $S_{n}=\sum_{i=1}^{n}X_{i}$,
then for all $t>0$,
\[
\Pr\left(S_{n}-\E[S_{n}]\geq t\right)\leq\exp\left(-\frac{2t^{2}}{\sum_{i=1}^{n}(b_{i}-a_{i})^{2}}\right)
\]
and 
\[
\Pr\left(S_{n}-\E[S_{n}]\leq-t\right)\leq\exp\left(-\frac{2t^{2}}{\sum_{i=1}^{n}(b_{i}-a_{i})^{2}}\right).
\]
\end{lemma}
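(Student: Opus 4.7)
The plan is to prove Hoeffding's inequality via the standard Chernoff--Cram\'er method: exponentiate, apply Markov's inequality, exploit independence to factorize the moment generating function, bound each factor by a sub-Gaussian estimate, and then optimize over the free parameter. Write $Y_i := X_i - \E[X_i]$, so $\E[Y_i] = 0$ and $Y_i \in [a_i - \E[X_i], b_i - \E[X_i]]$, an interval of length $b_i - a_i$. For any $s > 0$,
\[
\Pr\bigl(S_n - \E[S_n] \geq t\bigr) = \Pr\bigl(e^{s\sum_i Y_i} \geq e^{st}\bigr) \leq e^{-st}\prod_{i=1}^{n}\E\bigl[e^{sY_i}\bigr],
\]
where the inequality is Markov's and the factorization uses independence of the $Y_i$.

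The main technical step, and the only place the bounded-range hypothesis is really used, is the following sub-Gaussian bound on each factor (Hoeffding's lemma): if $Y$ is a mean-zero random variable supported on an interval $[\alpha, \beta]$, then $\E[e^{sY}] \leq \exp\!\bigl(s^2(\beta-\alpha)^2/8\bigr)$. To prove this, I would use convexity of $x \mapsto e^{sx}$ on $[\alpha, \beta]$ to write
\[
e^{sy} \leq \frac{\beta - y}{\beta - \alpha} e^{s\alpha} + \frac{y - \alpha}{\beta - \alpha} e^{s\beta} \qquad \text{for all } y \in [\alpha, \beta],
\]
take expectations (using $\E[Y] = 0$) to obtain $\E[e^{sY}] \leq \frac{\beta}{\beta - \alpha} e^{s\alpha} - \frac{\alpha}{\beta - \alpha} e^{s\beta}$, and then set $u := s(\beta - \alpha)$ and $p := -\alpha/(\beta - \alpha) \in [0,1]$ so that the right-hand side becomes $e^{\varphi(u)}$ for $\varphi(u) := -pu + \log(1 - p + pe^{u})$. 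A short calculation shows $\varphi(0) = \varphi'(0) = 0$ and $\varphi''(u) \leq 1/4$ for all $u$, so Taylor's theorem gives $\varphi(u) \leq u^2/8$, which is the claim.

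Plugging Hoeffding's lemma into the Markov bound yields
\[
\Pr\bigl(S_n - \E[S_n] \geq t\bigr) \leq \exp\!\Bigl(-st + \tfrac{s^2}{8}\sum_{i=1}^{n}(b_i - a_i)^2\Bigr).
\]
Optimizing in $s$ by choosing $s = 4t/\sum_{i}(b_i - a_i)^2$ (which is positive since $t > 0$) gives exactly $\exp\!\bigl(-2t^2/\sum_{i}(b_i - a_i)^2\bigr)$, proving the upper tail. The lower tail follows by applying the upper tail to the independent variables $-X_1, \dots, -X_n$, which lie in $[-b_i, -a_i]$, intervals of the same length $b_i - a_i$; this replaces $S_n - \E[S_n]$ by $-(S_n - \E[S_n])$ and gives the matching bound. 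I expect no genuine obstacle here: the only delicate point is the bound $\varphi''(u) \leq 1/4$, which reduces to the elementary inequality $q(1-q) \leq 1/4$ for $q = pe^{u}/(1 - p + pe^{u}) \in [0,1]$.
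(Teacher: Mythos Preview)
Your proof is the standard and correct argument for Hoeffding's inequality. Note, however, that the paper does not actually prove this lemma: it is stated as a well-known tool with a citation to Hoeffding's original paper~\cite{Hoeff}, and no proof is given. So there is nothing to compare against; your argument simply supplies the (classical) details that the paper omits as background.
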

Sometimes, we will find it more convenient to use the following bound on the
	upper and lower tails of the Binomial distribution due to Chernoff
	(see, e.g., Appendix A in \cite{AlonSpencer}).

		\begin{lemma}[Chernoff's inequality]
		Let $X \sim Bin(n, p)$ and let
		${\mathbb E}(X) = \mu$. Then
			\begin{itemize}
				\item
				$\Pr[X < (1 - a)\mu ] < e^{-a^2\mu /2}$
 				for every $a > 0$;
				\item $\Pr[X > (1 + a)\mu ] <
				e^{-a^2\mu /3}$ for every $0 < a < 3/2$.
			\end{itemize}
		\end{lemma}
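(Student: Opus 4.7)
The plan is to apply the standard exponential moment (Chernoff) method. I will write $X = \sum_{i=1}^n X_i$, where the $X_i$ are i.i.d.\ $\mathrm{Bernoulli}(p)$ variables, and for the upper tail use Markov's inequality on the nonnegative random variable $e^{tX}$ for a parameter $t>0$ to be optimized later:
\[
\Pr[X > (1+a)\mu] \;=\; \Pr\!\left[e^{tX} > e^{t(1+a)\mu}\right] \;\leq\; e^{-t(1+a)\mu}\,\mathbb{E}[e^{tX}].
\]
By independence, $\mathbb{E}[e^{tX}] = \prod_i \mathbb{E}[e^{tX_i}] = (1 + p(e^t-1))^n$, and the elementary inequality $1+x \leq e^x$ then gives $\mathbb{E}[e^{tX}] \leq \exp(np(e^t-1)) = \exp(\mu(e^t-1))$.

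Substituting back, the upper tail is bounded by $\exp\!\big(\mu(e^t - 1 - t(1+a))\big)$, and I would optimize over $t>0$ by setting $t = \ln(1+a)$, which yields the classical Chernoff bound
\[
\Pr[X > (1+a)\mu] \;\leq\; \left(\frac{e^a}{(1+a)^{1+a}}\right)^{\mu}
\;=\; \exp\!\big(-\mu\big((1+a)\ln(1+a) - a\big)\big).
\]
To conclude the claimed estimate $e^{-a^2\mu/3}$, the remaining task is the single-variable inequality $(1+a)\ln(1+a) - a \geq a^2/3$ on the range $0 < a < 3/2$; this is a routine calculus check (both sides, along with their first derivatives, vanish at $a=0$, and a direct comparison of second derivatives on $(0,3/2)$ suffices). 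The restriction $a<3/2$ appears precisely because the constant $1/3$ is not tight beyond that range.

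For the lower tail, I will mirror the argument with $e^{-tX}$ for $t>0$, obtaining
\[
\Pr[X < (1-a)\mu] \;\leq\; \exp\!\big(\mu(e^{-t}-1) + t(1-a)\mu\big),
\]
and optimize by choosing $t = -\ln(1-a) > 0$ (valid since we may assume $a<1$; the bound is vacuous otherwise). This produces $\Pr[X<(1-a)\mu] \leq \big(e^{-a}/(1-a)^{1-a}\big)^{\mu}$, and the claimed exponent $-a^2\mu/2$ follows from the companion inequality $(1-a)\ln(1-a) + a \geq a^2/2$ for $0 < a < 1$, again a routine calculus check. Overall, there is no real obstacle: the probabilistic content is just Markov plus independence plus moment-generating-function optimization, and the only step requiring care is verifying the two one-variable inequalities that convert the implicit Chernoff bound into the explicit Gaussian-type form stated in the lemma.
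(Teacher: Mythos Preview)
The paper does not prove this lemma; it simply states it and cites Appendix~A of Alon--Spencer as a reference. Your proposal is exactly the standard exponential-moment derivation, and it is correct in substance.

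One small imprecision worth flagging: your claim that ``a direct comparison of second derivatives on $(0,3/2)$ suffices'' for the upper-tail inequality $(1+a)\ln(1+a)-a \ge a^2/3$ is not quite right, since $\frac{1}{1+a} \ge \frac{2}{3}$ only holds for $a \le 1/2$. The inequality itself is nonetheless true on the full interval $(0,3/2)$ and is indeed routine calculus---for instance, set $f(a)=(1+a)\ln(1+a)-a-a^2/3$, observe $f(0)=f'(0)=0$, that $f'$ is unimodal (increasing on $(0,1/2)$, decreasing thereafter), and check directly that $f(3/2)>0$. For the lower tail your second-derivative comparison does work as stated.
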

\noindent
	\begin{remark}\label{rem:hyper} These bounds also hold when $X$ is
	hypergeometrically distributed with
	mean $\mu $.
\end{remark}
Before introducing the next tool to be used, we need the following
definition.

\begin{definition}
  Let $(A_i)_{i=1}^n$ be a collection of events in some probability
space. A graph $D$ on the vertex set $[n]$ is called a
\emph{dependency graph} for $(A_i)_i$ if $A_i$ is mutually
independent of all the events $\{A_j: ij\notin E(D)\}$.
\end{definition}

The following is the so called Lov\'asz Local Lemma, in its symmetric version (see, e.g., \cite{AlonSpencer}).

\begin{lemma}[Local Lemma]\label{LLL}
Let $(A_i)_{i=1}^n$ be a sequence of events in some probability
space, and let $D$ be a dependency graph for $(A_i)_i$. Let $\Delta:=\Delta(D)$ and
suppose that for every $i$ we have $\Pr\left[A_i\right]\leq q$, such that $eq(\Delta+1)<1$. Then, $\Pr[\bigcap_{i=1}^n \bar{A}_i]\geq \left(1-\frac{1}{\Delta+1}\right)^{n}$.
\end{lemma}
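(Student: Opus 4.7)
The plan is to prove a stronger uniform bound on conditional probabilities and then derive the stated inequality via the chain rule. Concretely, setting $p:=1/(\Delta+1)$, I would establish by induction on $|S|$ the following claim: for every $i\in[n]$ and every $S\subseteq[n]\setminus\{i\}$,
\[
\Pr\!\Big[A_i\;\Big|\;\bigcap_{j\in S}\bar{A}_j\Big]\leq p.
\]
The base case $|S|=0$ is immediate, since the hypothesis $eq(\Delta+1)<1$ yields $\Pr[A_i]\leq q<1/(e(\Delta+1))<p$.

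For the inductive step, the key idea is to split $S$ according to the dependency structure: set $S_1:=S\cap N_D(i)$ (so $|S_1|\leq \Delta$) and $S_2:=S\setminus S_1$. Since $A_i$ is mutually independent of $\{A_j:j\in S_2\}$ by the definition of a dependency graph, I would write
\[
\Pr\!\Big[A_i\;\Big|\;\bigcap_{j\in S}\bar{A}_j\Big]=\frac{\Pr\!\big[A_i\cap\bigcap_{j\in S_1}\bar{A}_j\;\big|\;\bigcap_{j\in S_2}\bar{A}_j\big]}{\Pr\!\big[\bigcap_{j\in S_1}\bar{A}_j\;\big|\;\bigcap_{j\in S_2}\bar{A}_j\big]}.
\]
The numerator is bounded above by $\Pr[A_i\mid\bigcap_{j\in S_2}\bar{A}_j]=\Pr[A_i]\leq q$ using independence. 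For the denominator, I would enumerate $S_1=\{j_1,\dots,j_k\}$ in any order and expand it as a telescoping product $\prod_{\ell=1}^k\Pr[\bar{A}_{j_\ell}\mid \bar{A}_{j_1},\dots,\bar{A}_{j_{\ell-1}},\bigcap_{j\in S_2}\bar{A}_j]$; each conditioning set here is a proper subset of $S$, so the inductive hypothesis applies and each factor is at least $1-p$. This gives a lower bound of $(1-p)^{|S_1|}\geq(1-p)^\Delta$. Since $(1-p)^{-\Delta}=(1+1/\Delta)^\Delta\leq e$, the ratio is at most $qe\leq 1/(\Delta+1)=p$ by hypothesis, closing the induction.

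Once this uniform bound is established, the chain rule finishes the job. Writing
\[
\Pr\!\Big[\bigcap_{i=1}^n\bar{A}_i\Big]=\prod_{i=1}^n\Pr\!\big[\bar{A}_i\;\big|\;\bar{A}_1,\dots,\bar{A}_{i-1}\big]\geq\prod_{i=1}^n(1-p)=\Big(1-\frac{1}{\Delta+1}\Big)^n,
\]
where each factor is bounded below by applying the inductive claim with $S=\{1,\dots,i-1\}$, yields exactly the stated inequality. The one place where care is required is the denominator expansion: one must verify that every intermediate conditioning set is strictly smaller than $S$ (which it is, since $S_2\cup\{j_1,\dots,j_{\ell-1}\}\subsetneq S$ because $j_\ell,\dots,j_k$ have been omitted), so that the inductive hypothesis genuinely applies. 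Beyond that subtlety, everything reduces to the elementary bound $(1+1/\Delta)^\Delta\leq e$ and the hypothesis on $q$.
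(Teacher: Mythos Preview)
The paper does not prove this lemma at all; it is stated as a standard tool with a reference to \cite{AlonSpencer}, so there is no ``paper's own proof'' to compare against. Your argument is the classical proof of the symmetric Local Lemma: induct on the size of the conditioning set to show $\Pr[A_i\mid\bigcap_{j\in S}\bar{A}_j]\leq 1/(\Delta+1)$, splitting $S$ into neighbours and non-neighbours of $i$ in the dependency graph, bounding the numerator by independence and the denominator via the inductive hypothesis, and then invoking $(1+1/\Delta)^\Delta\leq e$ together with $eq(\Delta+1)<1$. The final chain-rule step then gives the product bound $(1-1/(\Delta+1))^n$. This is correct as written; the only omission is the routine verification that all the conditional probabilities appearing along the way are well defined (i.e.\ that the conditioning events have positive probability), which follows from the same induction since each factor in the telescoping product is at least $1-1/(\Delta+1)>0$.
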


We will also make use of the following asymmetric version of the Lov\'asz
Local Lemma (see, e.g., \cite{AlonSpencer}).

\begin{lemma}[Asymmetric Local Lemma]\label{ALLL}
Let $(A_i)_{i=1}^n$ be a sequence of events in some probability
space. Suppose that $D$ is a dependency graph for $(A_i)_i$, and
suppose that there are real numbers $(x_i)_{i=1}^n$, such that $0\leq x_{i} <1$ and 
$$\Pr[A_i]\leq x_i\prod_{ij\in E(D)}(1-x_j)$$
for all $1\leq i\leq n$. 
Then, $\Pr[\bigcap_{i=1}^n \bar{A}_i]\geq\prod_{i=1}^{n}(1-x_{i})$.
\end{lemma}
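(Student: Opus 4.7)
The plan is to prove the stronger inductive statement that for every subset $S \subseteq [n]$ and every $i \in [n]\setminus S$, one has $\Pr[A_i \mid \bigcap_{j \in S} \bar{A}_j] \leq x_i$. Once this is established, the theorem follows from the chain rule: order the events arbitrarily as $1,\dots,n$ and write
$$\Pr\Big[\bigcap_{i=1}^{n} \bar{A}_i\Big] = \prod_{i=1}^{n} \Pr[\bar{A}_i \mid \bar{A}_1 \cap \dots \cap \bar{A}_{i-1}] \geq \prod_{i=1}^{n}(1-x_i),$$
which is exactly the claimed bound. So the entire proof reduces to establishing the inductive claim.

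I will prove the claim by induction on $|S|$. The base case $|S|=0$ is immediate from the hypothesis $\Pr[A_i] \leq x_i \prod_{ij \in E(D)}(1-x_j) \leq x_i$, since each factor $(1-x_j)$ lies in $(0,1]$. For the inductive step, split $S = S_1 \sqcup S_2$, where $S_1 := \{ j \in S : ij \in E(D)\}$ collects the neighbors of $i$ in the dependency graph $D$ that also lie in $S$, and $S_2 := S \setminus S_1$. If $S_1 = \emptyset$, then $A_i$ is mutually independent of $\{A_j : j \in S_2\}$ by the definition of the dependency graph, hence $\Pr[A_i \mid \bigcap_{j \in S}\bar{A}_j] = \Pr[A_i] \leq x_i$ and we are done.

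Otherwise, condition on $E := \bigcap_{j \in S_2}\bar{A}_j$ throughout and apply the identity
$$\Pr\Big[A_i \,\Big|\, \bigcap_{j \in S} \bar{A}_j\Big] = \frac{\Pr\!\left[A_i \cap \bigcap_{j \in S_1}\bar{A}_j \,\Big|\, E\right]}{\Pr\!\left[\bigcap_{j \in S_1}\bar{A}_j \,\Big|\, E\right]}.$$
For the numerator, drop the $\bar{A}_j$ terms (which only decrease the probability) and use that $A_i$ is independent of $\{A_j : j \in S_2\}$ to obtain an upper bound of $\Pr[A_i] \leq x_i \prod_{ij \in E(D)}(1-x_j)$. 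For the denominator, expand by the chain rule as $\prod_{j \in S_1} \Pr[\bar{A}_j \mid E \cap (\text{earlier } \bar{A}_k\text{'s in } S_1)]$; each conditioning set is a subset of $S$ strictly smaller than $S$, so the inductive hypothesis gives that each conditional probability of $A_j$ is at most $x_j$, yielding a lower bound of $\prod_{j \in S_1}(1-x_j)$. Combining the two bounds and using $\prod_{ij \in E(D)}(1-x_j) \leq \prod_{j \in S_1}(1-x_j)$ (since $S_1 \subseteq N_D(i)$ and the extra factors are at most $1$) gives $\Pr[A_i \mid \bigcap_{j \in S}\bar{A}_j] \leq x_i$, completing the induction.

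The only subtle step is the inductive application in the denominator: one must verify that each conditioning set encountered is strictly smaller than $S$, which holds because we peel off one element of $S_1$ at a time while $S_2$ stays fixed. The rest of the argument is bookkeeping once the key identity is written down. No additional tools beyond the definition of a dependency graph and elementary conditional-probability manipulations are required.
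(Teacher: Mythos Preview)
Your proof is correct and is essentially the standard textbook argument (the one given in Alon--Spencer, to which the paper defers). The paper does not supply its own proof of this lemma; it merely quotes the statement and cites \cite{AlonSpencer}, so there is nothing to compare against beyond noting that your write-up matches the classical inductive proof found there.
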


\subsection{Perfect matchings in bipartite graphs}

Here, we present a number of results related to  perfect matchings in bipartite graphs. The first result is a slight reformulation of the classic Hall's marriage theorem (see, e.g., \cite{SS}).
\begin{theorem}
\label{Hall}
Let $G=(A\cup B,E)$ be a balanced bipartite graph with $|A|=|B|=k$. Suppose $|N(X)|\geq |X|$ for all subsets $X$ of size at most $k/2$ which are completely contained either in $A$ or in $B$. Then, $G$ contains a perfect matching. 
\end{theorem}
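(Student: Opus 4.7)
The plan is to reduce this reformulation to the classical Hall marriage theorem, which guarantees a perfect matching in a balanced bipartite graph $G=(A\cup B,E)$ with $|A|=|B|=k$ as soon as $|N(X)|\geq |X|$ for every subset $X\subseteq A$ (with no size restriction). My task is therefore to verify the full Hall condition on the $A$-side, using only the weaker hypothesis that the expansion condition holds on both sides but only for subsets of size at most $k/2$.

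For $X\subseteq A$ with $|X|\leq k/2$ the inequality $|N(X)|\geq |X|$ is exactly the hypothesis, so there is nothing to do. The only interesting case is $X\subseteq A$ with $|X|>k/2$, which I would handle by a standard complementation argument on the $B$-side. Set $Y:=B\setminus N(X)$. By construction there are no edges of $G$ between $X$ and $Y$, so $N(Y)\subseteq A\setminus X$ and in particular $|N(Y)|\leq k-|X|<k/2$.

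If $|Y|\leq k/2$, applying the hypothesis to the $B$-side subset $Y$ gives $|Y|\leq |N(Y)|\leq k-|X|$, which rearranges to $|N(X)|=k-|Y|\geq |X|$, exactly the desired Hall inequality. If instead $|Y|>k/2$, I would pass to any subset $Y_{0}\subseteq Y$ of the largest integer size at most $k/2$; the hypothesis applied to $Y_{0}$ yields $|N(Y_{0})|\geq |Y_{0}|$, while $N(Y_{0})\subseteq N(Y)\subseteq A\setminus X$ forces $|N(Y_{0})|\leq k-|X|$. Comparing these two bounds produces a contradiction with $|X|>k/2$ (modulo a trivial parity check when $k$ is odd), so this subcase cannot arise. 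Combining both cases gives the classical Hall condition on all of $A$, and the classical theorem supplies a perfect matching.

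There is essentially no obstacle here: the entire argument is a textbook duality/complementation trick around Hall's theorem, and it makes no use of any finer structural property of $G$ beyond the stated expansion on small sets from either side. The only minor bookkeeping would be the $k$-odd parity issue mentioned above, which is routine and does not affect any downstream application in the paper (where the relevant part sizes will in any case be even).
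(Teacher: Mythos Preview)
The paper does not actually prove this statement; it merely records it as ``a slight reformulation of the classic Hall's marriage theorem'' and refers to \cite{SS}. Your complementation argument is the standard one and is correct when $k$ is even: in the case $|X|>k/2$, $|Y|>k/2$ you take $Y_0\subseteq Y$ of size $k/2$, obtain $|N(Y_0)|\ge k/2$, and derive a contradiction from $N(Y_0)\subseteq A\setminus X$ with $|A\setminus X|\le k/2-1$.

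One caution about your parenthetical remark: for odd $k$ the parity issue is \emph{not} routine bookkeeping; the statement as literally written (``size at most $k/2$'', i.e.\ at most $\lfloor k/2\rfloor$) is in fact false. For $k=3$, the graph with edges $a_1b_1,\,a_2b_1,\,a_3b_2,\,a_3b_3$ has every singleton expanding but no perfect matching. The version in \cite{SS} requires the expansion hypothesis for sets of size up to $\lceil k/2\rceil$, and with that reading your argument goes through uniformly without any case split. You are right, however, that this subtlety does not affect the downstream applications in the paper: in each place the theorem is invoked, the relevant subgraphs satisfy the expansion hypothesis for sets well beyond the $\lceil k/2\rceil$ threshold anyway.
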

Moreover, we can always find a maximum matching in a bipartite graph in polynomial time using standard network flow algorithms (see, e.g., \cite{West}). 

The following simple corollaries of Hall's theorem will be useful for us.
\begin{corollary}\label{matching in regular}
Every $r$-regular balanced bipartite graph has a perfect matching, provided that $r\geq 1$. 
\end{corollary}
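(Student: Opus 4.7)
The plan is to verify Hall's condition as stated in \cref{Hall} by a simple double-counting argument, and then invoke the theorem directly. Let $G=(A\cup B,E)$ be an $r$-regular balanced bipartite graph with $|A|=|B|=k$ and $r\geq 1$.

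First, I would show the stronger statement that $|N(X)|\geq |X|$ holds for every subset $X\subseteq A$ (and symmetrically for every $X\subseteq B$), not merely those of size at most $k/2$. Fix $X\subseteq A$ and consider the edges of $G$ with one endpoint in $X$. Since $G$ is $r$-regular, the number of such edges is exactly $r|X|$. On the other hand, each such edge has its other endpoint in $N(X)\subseteq B$, and since each vertex of $B$ has degree $r$ in $G$, the number of edges incident to $N(X)$ is at most $r|N(X)|$. Comparing the two counts gives $r|X|\leq r|N(X)|$, and dividing by $r\geq 1$ yields $|X|\leq |N(X)|$. The same argument applies verbatim to subsets of $B$.

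With Hall's condition verified, \cref{Hall} immediately implies that $G$ contains a perfect matching, which is the desired conclusion. There is no real obstacle here; the only minor point to be mindful of is that the statement of \cref{Hall} restricts attention to subsets of size at most $k/2$, but the double-counting argument gives Hall's condition for all subsets of $A$ or $B$, so in particular for those of size at most $k/2$.
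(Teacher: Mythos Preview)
Your proof is correct and follows essentially the same approach as the paper: both verify Hall's condition via the double-counting argument $r|X| = e_G(X,N(X)) \leq r|N(X)|$ and then apply \cref{Hall}. The only cosmetic difference is that you prove the inequality for all $X\subseteq A$ rather than just $|X|\leq k/2$, but the argument is identical and does not use the size restriction anyway.
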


\begin{proof}
Let $G=(A\cup B,E)$ be an $r$-regular graph. Let $X\subseteq A$ be a set of size at most $|A|/2$. Note that as $G$ is $r$-regular, we have $$e_G(X,N(X))=r|X|.$$ 
Since each vertex in $N(X)$ has degree at most $r$ into $X$, we get 
$$|N(X)|\geq e_G(X,N(X))/r\geq |X|.$$ 
Similarly, for every $X\subseteq B$ of size at most $|B|/2$ we obtain 
$$|N(X)|\geq |X|.$$
Therefore, by \cref{Hall}, we conclude that $G$ contains a perfect matching.
\end{proof}

Since removing an arbitrary perfect matching from a regular balanced bipartite graph leads to another regular balanced bipartite graph, a simple repeated application of \cref{matching in regular} 
shows the following:

\begin{corollary}
\label{Hall-application-1fact-reg-bipartite}
Every regular balanced bipartite graph has a $1$-factorization. 
\end{corollary}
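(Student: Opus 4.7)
The plan is to prove the statement by straightforward induction on the regularity parameter $r$, using \cref{matching in regular} at each step to strip off a perfect matching. The base case can be taken to be $r=1$, in which case the graph is itself a perfect matching and hence constitutes its own (trivial) $1$-factorization; alternatively, one could take $r=0$ and get an empty decomposition.

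For the inductive step, I would assume the claim for $(r-1)$-regular balanced bipartite graphs, and let $G=(A\cup B,E)$ be $r$-regular balanced bipartite with $r\geq 1$. By \cref{matching in regular}, $G$ contains a perfect matching $M$. Consider $G':=G\setminus M$, i.e.\ the graph obtained by deleting the edges of $M$. Since $M$ covers every vertex exactly once, each vertex of $G'$ loses precisely one incident edge, so $G'$ is $(r-1)$-regular. It is also still balanced bipartite with the same bipartition $(A,B)$. By the inductive hypothesis, $G'$ admits a $1$-factorization $M_1,\dots,M_{r-1}$, and then $M,M_1,\dots,M_{r-1}$ is a $1$-factorization of $G$.

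There is essentially no real obstacle here; the only thing to check is that removing a perfect matching from a regular balanced bipartite graph preserves both properties (regularity and the balanced bipartite structure), which is immediate from the definition of a perfect matching. An algorithmic remark: since a perfect matching in a bipartite graph can be computed in polynomial time by standard network flow methods (as noted just after \cref{Hall}), iterating this procedure $r$ times produces the $1$-factorization in polynomial time as well.
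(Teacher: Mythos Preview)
Your proof is correct and is essentially the same as the paper's: the paper simply notes that removing a perfect matching from a regular balanced bipartite graph yields another regular balanced bipartite graph, and that repeated application of \cref{matching in regular} gives the result. Your formal induction on $r$ just makes this iteration explicit.
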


In fact, as the following theorem due to Schrijver \cite{Schrijver} shows, a regular balanced bipartite graph has many 1-factorizations. 

\begin{theorem}
\label{Schrijver-lower-bound-1-fact}
The number of $1$-factorizations of a $d$-regular bipartite graph with $2k$ vertices is at least 
$$\left(\frac{d!^2}{d^d}\right)^k.$$
\end{theorem}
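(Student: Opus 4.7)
The plan is to apply Schrijver's classical permanent lower bound (\cite{Schrijver}) iteratively. Schrijver's theorem states that any $d$-regular bipartite graph $H$ on $2k$ vertices has at least $\left(\frac{(d-1)^{d-1}}{d^{d-2}}\right)^k$ perfect matchings. Starting from our given $d$-regular bipartite graph $G$, I would build up a $1$-factorization greedily, one perfect matching at a time: at stage $j$ (for $j=d,d-1,\dots,1$), the current graph $G_j$ is $j$-regular and bipartite on $2k$ vertices, so Schrijver's bound produces at least
$$N_j := \left(\frac{(j-1)^{j-1}}{j^{j-2}}\right)^k$$
choices for a perfect matching $M_j$ to extract, after which $G_{j-1}:=G_j\setminus M_j$ is $(j-1)$-regular and bipartite, so the recursion continues until $G_0$ is the empty graph.

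This greedy construction produces at least $\prod_{j=1}^{d}N_j$ distinct (ordered) $1$-factorizations $(M_d,M_{d-1},\dots,M_1)$ of $G$. A short telescoping calculation shows
$$\prod_{j=1}^{d} \frac{(j-1)^{j-1}}{j^{j-2}} \;=\; \frac{\left(\prod_{m=1}^{d-1} m^m\right)(d!)^2}{\prod_{j=1}^{d} j^j} \;=\; \frac{(d!)^2}{d^d},$$
using the substitution $m=j-1$ in the numerator, the identity $\prod_{j=1}^{d} j^{j-2} = \prod_{j=1}^{d} j^j \cdot (d!)^{-2}$ in the denominator, and the collapse $\prod_{j=1}^{d} j^j = d^d \prod_{j=1}^{d-1} j^j$. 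Raising to the $k$-th power yields the claimed lower bound of $\left((d!)^2/d^d\right)^k$.

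The main \emph{obstacle} here is really Schrijver's deep permanent inequality itself, which we invoke as a black box. Once that is assumed, the entire argument reduces to an elementary greedy construction combined with a short algebraic telescoping, with no genuine difficulty to navigate; this is why the statement is quoted as a theorem of Schrijver. It is worth noting that the lower bound above counts ordered sequences of edge-disjoint perfect matchings whose union is $E(G)$, which is precisely the form needed for the application of \cref{Schrijver-lower-bound-1-fact} in the proof of \cref{main:counting}.
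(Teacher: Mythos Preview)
Your derivation is correct and is precisely the standard way this corollary is obtained from Schrijver's perfect-matching inequality; the paper itself does not supply a proof of \cref{Schrijver-lower-bound-1-fact} but simply quotes it from \cite{Schrijver}. One small remark: as you note, the product $\prod_{j=1}^{d}N_j$ counts \emph{ordered} sequences of matchings (equivalently, proper edge $d$-colorings), and this is indeed the form in which Schrijver states the corollary and the form that is actually used in \cref{rmk:counting-completion} and the proof of \cref{main:counting}; the distinction between ordered and unordered $1$-factorizations (a factor of $d!$) is in any case negligible for the asymptotic count.
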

The next result is a criterion for the existence of $r$-\emph{factors} (that is, $r$-regular, spanning subgraphs) in bipartite graphs, which follows from a generalization of the Gale-Ryser theorem due to Mirsky \cite{Mirsky}. 

\begin{theorem}
  \label{gale}
Let $G=(A\cup B,E)$ be a balanced bipartite graph with $|A|=|B|=m$, and let $r$ be an integer. Then, $G$ contains an $r$-factor if and only if for all $X\subseteq A$ and $Y\subseteq B$
\begin{align*}
  e_G(X,Y)\geq r(|X|+|Y|-m).
\end{align*}
\end{theorem}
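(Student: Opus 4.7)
The plan is to handle both directions of the equivalence separately: the forward (necessity) direction is a straightforward edge count, while the backward (sufficiency) direction reduces to the max-flow / min-cut theorem applied to a suitable auxiliary network.

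For necessity, suppose $H \subseteq G$ is an $r$-factor. For any $X \subseteq A$ and $Y \subseteq B$, there are exactly $r|X|$ edges of $H$ with an endpoint in $X$, of which at most $r(m-|Y|)$ can go to $B \setminus Y$ (since every vertex of $B \setminus Y$ has $H$-degree $r$). Hence $e_G(X,Y) \geq e_H(X,Y) \geq r|X| - r(m-|Y|) = r(|X|+|Y|-m)$, as required.

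For sufficiency, build a flow network $N$ with source $s$, sink $t$, a capacity-$r$ arc from $s$ to each $a \in A$, a capacity-$r$ arc from each $b \in B$ to $t$, and a capacity-$1$ arc from $a$ to $b$ whenever $ab \in E(G)$. Integral $s$-$t$ flows of value $rm$ in $N$ correspond bijectively to $r$-factors of $G$, so by the integrality of max flow on integer-capacitated networks, $G$ contains an $r$-factor if and only if the maximum flow in $N$ has value $rm$. A generic $s$-$t$ cut is specified by the sets $X \subseteq A$ and $Y' \subseteq B$ on the source side, and has capacity $r(m - |X|) + r|Y'| + e_G(X, B \setminus Y')$. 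Setting $Y := B \setminus Y'$, the condition that every cut has capacity at least $rm$ rearranges to $e_G(X,Y) \geq r(|X| + |Y| - m)$ for all $X \subseteq A$ and $Y \subseteq B$, and the claim follows from max-flow min-cut.

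There is no serious obstacle in this argument; the only point requiring a bit of care is the cut-capacity bookkeeping and the correspondence between integral $s$-$t$ flows of value $rm$ and $r$-factors of $G$. A purely combinatorial alternative is to apply a defect version of Hall's theorem to the $r$-fold blow-up of $G$ (the bipartite graph on $(A \cup B) \times [r]$ with $(a,i)$ adjacent to $(b,j)$ iff $ab \in E(G)$), but the network-flow route seems the cleanest of the two.
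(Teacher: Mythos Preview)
The paper does not supply its own proof of this theorem; it is quoted as a known result following from a generalization of the Gale--Ryser theorem due to Mirsky, and the paper immediately remarks that ``such factors can be found efficiently using standard network flow algorithms.'' Your max-flow/min-cut argument is correct and is exactly the standard proof behind that remark, so there is nothing to compare against.

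One small caveat about your closing alternative: the $r$-fold blow-up on $(A\cup B)\times[r]$ with $(a,i)\sim(b,j)$ iff $ab\in E(G)$ does not quite do the job as stated. A perfect matching there yields, for each $a\in A$, a multiset of $r$ neighbours in $B$ (since distinct copies $(a,i)$ may be matched to copies of the \emph{same} $b$), i.e.\ an $r$-regular spanning sub-multigraph supported on $E(G)$, not necessarily a simple $r$-factor. Fixing this requires capping the multiplicity of each edge at~$1$, which is precisely the capacity-$1$ constraint in your flow network; so the ``purely combinatorial'' route collapses back into the network-flow argument. This does not affect your main proof, which is fine.
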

Moreover, such factors can be found efficiently using standard network flow algorithms (see, e.g., \cite{Anstee}).  

As we are going to work with pseudorandom graphs, it will be convenient for us to isolate some `nice' properties that, together with \cref{gale}, ensure the existence of large factors in balanced bipartite graphs.

\begin{lemma}
\label{finding-large-factors}
Let $G=(A\cup B,E)$ be a balanced bipartite graph
with $|A|=|B|=n/2$. Suppose there exist $r,\varphi\in\R^{+}$ and
$\beta_{1},\beta_{2},\beta_{3},\gamma\in(0,1)$ satisfying the following
additional properties:
\begin{enumerate} [$(P1)$]
\item $\deg_{G}(v)\geq r(1-\beta_{1})$ for all $v\in A\cup B$.
\item $e_{G}(X,Y)< r\beta_{2}|X|$ for all $X\subseteq A$ and $Y\subseteq B$
with $|X|=|Y|\leq r/\varphi$.
\item $e_{G}(X,Y)\geq2r(1-\beta_{3})|X||Y|/n$ for all $X\subseteq A$
and $Y\subseteq B$ with $|X|+|Y|>n/2$ and $\min\{|X|,|Y|\}>r/\varphi$.
\item $\gamma\geq\max\{\beta_{3},\beta_{1}+\beta_{2}\}$
\end{enumerate}
Then, $G$ contains an $\lfloor r(1-\gamma)\rfloor$-factor. 
\end{lemma}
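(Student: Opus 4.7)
\textbf{Proof plan for \cref{finding-large-factors}.} The plan is to invoke the Gale--Ryser--Mirsky criterion (\cref{gale}) with $r' := \lfloor r(1-\gamma)\rfloor$ and $m = n/2$, and verify the inequality $e_G(X,Y)\geq r'(|X|+|Y|-m)$ for every $X\subseteq A$, $Y\subseteq B$ by a case analysis driven by the hypotheses (P1)--(P4). Write $x=|X|$ and $y=|Y|$, and note that $x,y\leq n/2$ throughout.

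\medskip

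\emph{Case 1: $x+y\leq n/2$.} The right-hand side of the Gale--Ryser inequality is non-positive, so it holds trivially.

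\medskip

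\emph{Case 2a: $x+y>n/2$ and $\min\{x,y\}>r/\varphi$.} Here I would apply (P3) directly to get $e_G(X,Y)\geq 2r(1-\beta_3)xy/n$. It then suffices to establish the purely algebraic inequality $2xy/n \geq x+y-n/2$ whenever $x,y\leq n/2$; this follows from the identity
\[
\frac{2xy}{n}-x-y+\frac{n}{2}=\frac{(n-2x)(n-2y)}{2n}\geq 0,
\]
since both factors on the numerator are non-negative. Combining this with $(1-\beta_3)\geq (1-\gamma)$ (from (P4)) and $r'\leq r(1-\gamma)$ yields the required bound.

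\medskip

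\emph{Case 2b: $x+y>n/2$ and $\min\{x,y\}\leq r/\varphi$.} Without loss of generality assume $x\leq r/\varphi$; then $|B\setminus Y|=n/2-y<x\leq r/\varphi$. By (P1), $\sum_{v\in X}\deg_G(v)\geq r(1-\beta_1)x$, so
\[
e_G(X,Y)\geq r(1-\beta_1)x-e_G(X,B\setminus Y).
\]
To bound $e_G(X,B\setminus Y)$ via (P2), which requires a set on the $B$-side of size equal to $|X|$, I would pad $B\setminus Y$ to an arbitrary superset $Y''\subseteq B$ with $|Y''|=x\leq r/\varphi$; then (P2) gives $e_G(X,B\setminus Y)\leq e_G(X,Y'')<r\beta_2 x$. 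Hence
\[
e_G(X,Y)\geq r(1-\beta_1-\beta_2)x\geq r(1-\gamma)x,
\]
using (P4). Finally, because $y\leq n/2$ we have $x+y-n/2\leq x$, and so $r'(x+y-n/2)\leq r(1-\gamma)x\leq e_G(X,Y)$, as required.

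\medskip

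Combining the three cases, \cref{gale} applies with $r'=\lfloor r(1-\gamma)\rfloor$, producing the desired factor. The only mildly delicate point is the padding step in Case 2b: one must notice that the `bad' complementary set $B\setminus Y$ is automatically small enough to be enlarged to a set of size $|X|$ without exceeding the $r/\varphi$ threshold demanded by (P2). Everything else is a routine splitting into `large vs.\ small' regimes for the pair $(|X|,|Y|)$, with (P3) handling the bulk regime and (P1)+(P2) handling the peripheral one.
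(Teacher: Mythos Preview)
Your proof is correct and follows the same strategy as the paper: verify the Gale--Ryser--Mirsky inequality by the same case split (trivial when $|X|+|Y|\le n/2$; use (P1)+(P2) when one side is small; use (P3) when both sides are large). Your handling of the ``both large'' case is in fact cleaner than the paper's---where the paper checks the inequality $xy \ge \tfrac{1-\gamma}{1-\beta_3}(x+y-1)$ by a boundary-plus-critical-point argument, you dispose of the equivalent inequality $2xy/n \ge x+y-n/2$ in one line via the factorization $(n-2x)(n-2y)/(2n)\ge 0$; and you make explicit the padding of $B\setminus Y$ up to size $|X|$ that the paper leaves implicit when invoking (P2).
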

\begin{proof} By \cref{gale}, it suffices to verify
that for all $X\subseteq A$ and $Y\subseteq B$ we have
$$e_{G}(X,Y)\geq r(1-\gamma)\left(|X|+|Y|-\frac{n}{2}\right).$$
We divide the analysis into five cases:
\begin{enumerate} [$\text{Case }1$]

\item $|X|+|Y|\leq n/2$. In this case, we trivially have
$$e_{G}(X,Y)\geq 0\geq r(1-\gamma)\left(|X|+|Y|-\frac{n}{2}\right),$$ so there is nothing to prove.

\item $|X|+|Y|>n/2$ and $|X|\leq r/\varphi$. Since
$|Y|\leq|B|=\frac{n}{2}$, we always have $|X|+|Y|-\frac{n}{2}\geq|X|$.
Thus, it suffices to verify that
$$e_{G}(X,Y)\geq r(1-\gamma)|X|.$$
Assume, for the sake of contradiction, that this is not the case. Then, since there
are at least $r(1-\beta_{1})|X|$ edges incident to $X$, we must
have
$$e_{G}(X,B\backslash Y)\geq r(1-\beta_1)|X|-e_G(X,Y)\geq r(\gamma-\beta_{1})|X|\geq r\beta_{2}|X|.$$
However, since $|B\backslash Y|\leq|X|$, this contradicts $(P2)$.

\item $|X|+|Y|>n/2$ and $|Y|\leq r/\varphi$. This is
exactly the same as the previous case with the roles of $X$ and $Y$ interchanged.

\item $|X|+|Y|>n/2$, $|X|,|Y|>r/\varphi$ and $|Y|\geq|X|$.
By $(P3)$, it suffices to verify that
$$2r(1-\beta_{3})|X||Y|/n\geq r(1-\gamma)\left(|X|+|Y|-n/2\right).$$
Dividing both sides by $rn/2$, the above inequality is equivalent to $$xy-\frac{(1-\gamma)}{(1-\beta_{3})}(x+y-1)\geq0,$$
where $x=2|X|/n$, $y=2|Y|/n$,  $x+y\geq1$, $0\leq x\leq1$, and $0\leq y\leq1$.

Since $\frac{1-\gamma}{1-\beta_{3}}\leq1$ by $(P4)$, this is readily verified
on the (triangular) boundary of the region, on which the inequality reduces to one of the following: $xy\geq 0$; $x \geq \frac{1-\gamma}{1-\beta_{3}}x$; $y\geq \frac{1-\gamma}{1-\beta_{3}}y$. On the other hand, the
only critical point in the interior of the region is possibly at $x_{0}=y_{0}=\frac{1-\gamma}{1-\beta_{3}}$,
for which we have $x_{0}y_{0}-\frac{1-\gamma}{1-\beta_{3}}(x_{0}+y_{0}-1)=\frac{1-\gamma}{1-\beta_{3}}\left(1-\frac{1-\gamma}{1-\beta_{3}}\right)\geq0$,
again by $(P4)$.

\item $|X|+|Y|>n/2$, $|X|,|Y|>r/\varphi$ and $|X|\geq|Y|$.
This is exactly the same as the previous case with the roles of $X$ and $Y$
interchanged.
\end{enumerate}
\end{proof}

\subsection{Matchings in graphs with controlled degrees}
In this section, we collect a couple of results on matchings in (not necessarily bipartite) graphs satisfying some degree conditions. A $2$-\emph{factorization} of a graph is a decomposition of its edges into $2$-factors (that is, a collection of vertex disjoint cycles that covers all the vertices). The following theorem, due to Petersen \cite{Pet}, is one of the earliest results in graph theory.   
\begin{theorem}[2-factor Theorem] 
\label{2-factor theorem}
Every $2k$-regular graph with $k\geq 1$ admits a 2-factorization. 
\end{theorem}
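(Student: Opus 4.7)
The plan is to use the classical argument via Eulerian circuits and bipartite perfect matchings, reducing the statement to an application of \cref{Hall-application-1fact-reg-bipartite}. First, I would reduce to the connected case: since each connected component of a $2k$-regular graph is itself $2k$-regular, producing a $2$-factorization on each component and taking their union yields a $2$-factorization of the whole graph. So assume henceforth that $G$ is connected. Since all vertex degrees in $G$ equal $2k$ and in particular are even, $G$ admits an Eulerian circuit $C$. Traversing $C$ orients each edge of $G$ exactly once; this orientation is the only combinatorial input I need from $G$ itself.

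Next, I would build an auxiliary balanced bipartite graph $H$ on vertex set $A \cup B$, where $A$ and $B$ are two disjoint copies of $V(G)$; denote by $v_A$ and $v_B$ the copies of $v \in V(G)$. For every edge $e = uv \in E(G)$ that is oriented from $u$ to $v$ by $C$, place the corresponding edge $\{u_A, v_B\}$ in $H$. Since the Eulerian circuit enters and leaves each vertex $v$ exactly $k$ times (because $v$ has degree $2k$), every vertex $v_A$ receives exactly $k$ outgoing incidences and every vertex $v_B$ receives exactly $k$ incoming incidences, so $H$ is a $k$-regular balanced bipartite graph. By \cref{Hall-application-1fact-reg-bipartite}, $H$ admits a $1$-factorization $M_1, \dots, M_k$ into edge-disjoint perfect matchings.

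Finally, I would pull each matching $M_i$ back to a spanning subgraph $F_i \subseteq G$ by identifying each edge of $H$ with the underlying edge of $G$ that produced it (this identification is well defined since the map $E(G) \to E(H)$ arising from $C$ is a bijection). In $F_i$, each vertex $v \in V(G)$ is incident to exactly one \emph{out-edge} (the edge of $G$ corresponding to the unique $M_i$-edge at $v_A$) and exactly one \emph{in-edge} (the edge corresponding to the unique $M_i$-edge at $v_B$); these two edges of $G$ are distinct, since in the Eulerian circuit each edge of $G$ is traversed only once and therefore serves either as an in-edge or an out-edge at $v$, but not both. Consequently every vertex of $G$ has degree exactly $2$ in $F_i$, so $F_i$ is a $2$-factor. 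The $F_i$ partition $E(G)$ because the $M_i$ partition $E(H)$ and the identification is a bijection, yielding the desired $2$-factorization.

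There is essentially no serious obstacle in this plan; the only point requiring care is the distinctness of the in- and out-edge at each vertex $v$ within a single $F_i$, which is precisely what is guaranteed by orienting the edges via an Eulerian circuit rather than by any other convention. This is why the Eulerian traversal is the right bridge between the non-bipartite regular graph $G$ and the bipartite regular graph $H$ to which \cref{Hall-application-1fact-reg-bipartite} applies.
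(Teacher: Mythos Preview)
Your argument is correct and is precisely the classical proof of Petersen's theorem via an Eulerian orientation and the bipartite double cover, reducing to \cref{Hall-application-1fact-reg-bipartite}. The paper itself does not give a proof of this statement; it is merely cited as a known result due to Petersen \cite{Pet}, so there is nothing to compare against beyond noting that your proof is the standard one and fits cleanly into the paper's toolkit.
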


The next theorem, due to Vizing \cite{Vizing}, shows that every graph $G$ admits a proper edge coloring using at most $\Delta(G)+1$ colors.  
\begin{theorem}[Vizing's Theorem] 
\label{Vizing's theorem}
Every graph with maximum degree $\Delta$ 
can be properly edge-colored with $k\in\{\Delta,\Delta+1\}$ colors. 
\end{theorem}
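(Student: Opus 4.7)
The plan is induction on the number of edges of $G$, the empty case being trivial. For the inductive step, fix any edge $e=uv_1$ and apply the induction hypothesis to $G-e$ to obtain a proper edge coloring $c$ of $G-e$ with a palette of $\Delta+1$ colors. The task reduces to modifying $c$, if necessary, so that it extends to a proper coloring of $G$ on the same palette. Say a color $\alpha$ is \emph{missing} at a vertex $w$ if no edge incident to $w$ currently uses color $\alpha$. Since $\deg_G(w)\leq\Delta$ for every $w$, each vertex has at least one missing color among the $\Delta+1$ available.

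If some single color is missing simultaneously at $u$ and at $v_1$, assigning it to $e$ finishes the argument. Otherwise, I would construct a \emph{Vizing fan} at $u$: a sequence of distinct neighbors $v_1,v_2,\ldots,v_k$ of $u$ such that for each $i=1,\ldots,k-1$, the color $c(uv_{i+1})$ is a color missing at $v_i$. The fan is grown greedily, and if at some stage a color missing at the current tip $v_k$ is also missing at $u$, then a \emph{rotation} finishes: recolor $uv_i$ with the old color $c(uv_{i+1})$ for $i=1,\ldots,k-1$ and color $uv_k$ with the common missing color. Properness is preserved because each new color assigned to $uv_i$ was missing at $v_i$ prior to the rotation.

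Suppose instead the fan is maximal and no such coincidence occurs. Pick a color $\alpha$ missing at $u$ and a color $\beta$ missing at $v_k$; the stopping rule forces $\alpha\neq\beta$. The subgraph $H$ of $G$ consisting of edges colored $\alpha$ or $\beta$ has maximum degree $2$, so it is a disjoint union of paths and even cycles; let $P$ be the maximal $\alpha\beta$-alternating path starting at $v_k$ (beginning with a $\beta$-edge, since $\beta$ is missing at $v_k$). Swapping $\alpha$ and $\beta$ along $P$ yields another proper edge coloring of $G-e$, after which $\alpha$ becomes missing at $v_k$. If $P$ does not terminate at $u$ nor at any $v_j$ on the fan with $c(uv_{j+1})=\beta$, then $\alpha$ is still missing at $u$ as well, and the rotation described above applied to the full fan extends the coloring to $e$. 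In the remaining cases, I would truncate the fan at an appropriate index $j\leq k$ (chosen according to where $P$ ends) and rotate only the initial segment $v_1,\ldots,v_j$.

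The main obstacle is exactly this last bookkeeping: one must check that for each possible endpoint of $P$ — the vertex $u$, some earlier fan vertex $v_j$ whose incident edge carries color $\beta$, or a vertex outside the fan — there is a valid truncation index that leaves $u$ and the corresponding tip of the truncated fan missing a common color after the Kempe swap. The argument here is a short symmetric case analysis exploiting two structural facts: inside the fan each color $c(uv_i)$ appears on exactly one edge at $u$, and an alternating $\alpha\beta$-path has exactly two endpoints, so it can interfere with at most one position in the fan. Once this combinatorial check is carried out, the inductive step completes and the theorem follows.
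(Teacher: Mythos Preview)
The paper does not prove Vizing's theorem; it is stated as a classical tool with a citation to \cite{Vizing} and used as a black box. Your proposal follows the standard Vizing fan / Kempe chain argument, which is indeed the canonical proof. The outline is correct: induction on edges, build a fan at $u$, rotate if a common missing color appears, and otherwise perform an $\alpha\beta$-swap along the alternating path from $v_k$ before rotating a (possibly truncated) fan. Your final paragraph honestly flags that the endpoint case analysis for the Kempe path is only sketched; the standard way to close this is to note that since $\alpha$ is missing at $u$, the path $P$ cannot end at $u$, and since $\beta$ was missing at $v_k$ before the swap, maximality of the fan forces $\beta$ to already be the color $c(uv_j)$ for some earlier $j$, so if $P$ ends at $v_{j-1}$ one rotates the truncated fan $v_1,\ldots,v_{j-1}$ instead. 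With that detail filled in, the argument is complete and matches the textbook proof.
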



\subsection{Expander Mixing Lemma}
When dealing with $(n,d,\lambda)$ graphs, we will repeatedly use the following lemma (see, e.g., \cite{HLW}), which bounds the difference between the actual number of edges between two sets of vertices, and the number of edges we expect based on the sizes of the sets. 

\begin{lemma}[Expander Mixing Lemma]
\label{Expander Mixing Lemma}
Let $G=(V,E)$ be an $(n,d,\lambda)$ graph, and let $S,T\subseteq V$. Let $e(S,T) = |\{(x,y)\in S\times T: xy \in E\}|$. Then, 
$$\left|e(S,T) - \frac{d|S||T|}{n}\right|\leq \lambda \sqrt{|S||T|}.$$
\end{lemma}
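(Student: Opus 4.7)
The plan is to prove the Expander Mixing Lemma via the spectral decomposition of the adjacency matrix $A = A(G)$, following the classical approach. Since $A$ is real symmetric, it admits an orthonormal eigenbasis $v_1, v_2, \ldots, v_n$, where we may take $v_1 = \mathbf{1}/\sqrt{n}$ to be the unit eigenvector corresponding to $\lambda_1 = d$, and $v_i$ corresponds to $\lambda_i$ for $i \geq 2$, with $|\lambda_i| \leq \lambda$.

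The first step is to reinterpret $e(S,T)$ as a bilinear form. Letting $\mathbf{1}_S, \mathbf{1}_T \in \R^n$ be the characteristic vectors of $S$ and $T$, one checks directly from the definition of $A$ that
$$e(S,T) = \mathbf{1}_S^T A \mathbf{1}_T = \sum_{i=1}^{n} \lambda_i \langle \mathbf{1}_S, v_i\rangle \langle \mathbf{1}_T, v_i\rangle,$$
where the second equality uses the spectral decomposition. The $i=1$ term is exactly the \emph{main term}: since $\langle \mathbf{1}_S, v_1\rangle = |S|/\sqrt{n}$ and $\langle \mathbf{1}_T, v_1\rangle = |T|/\sqrt{n}$, it equals $d|S||T|/n$.

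The second step is to control the remaining contribution. Setting $\alpha_i := \langle \mathbf{1}_S, v_i\rangle$ and $\beta_i := \langle \mathbf{1}_T, v_i\rangle$, I would bound
$$\left| e(S,T) - \frac{d|S||T|}{n} \right| = \left| \sum_{i=2}^{n} \lambda_i \alpha_i \beta_i \right| \leq \lambda \sum_{i=2}^{n} |\alpha_i \beta_i| \leq \lambda \sqrt{\sum_{i=2}^{n} \alpha_i^2} \sqrt{\sum_{i=2}^{n} \beta_i^2}$$
by the triangle inequality, the bound $|\lambda_i| \leq \lambda$ for $i \geq 2$, and the Cauchy--Schwarz inequality. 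Finally, Parseval's identity gives $\sum_{i=1}^{n} \alpha_i^2 = \|\mathbf{1}_S\|^2 = |S|$ and similarly $\sum_{i=1}^{n} \beta_i^2 = |T|$, so the tail sums are at most $|S|$ and $|T|$ respectively, yielding the claimed bound $\lambda \sqrt{|S||T|}$.

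There is no genuine obstacle here; the argument is entirely linear-algebraic and relies only on the spectral theorem, Cauchy--Schwarz, and Parseval's identity. The only minor point to keep track of is being careful that $v_1 = \mathbf{1}/\sqrt{n}$ is indeed a unit eigenvector with eigenvalue exactly $d$, which follows from $d$-regularity, and that the orthogonal complement of $\mathrm{span}(v_1)$ carries eigenvalues bounded in absolute value by $\lambda$, which is the definition of an $(n,d,\lambda)$-graph.
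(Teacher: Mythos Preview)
Your proof is correct and is exactly the standard spectral argument for the Expander Mixing Lemma. Note, however, that the paper does not actually prove this lemma: it is stated without proof and attributed to the literature (see, e.g., \cite{HLW}), so there is no ``paper's own proof'' to compare against. Your argument is precisely the one found in the cited reference.
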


\section{Random partitioning}
\label{section-random-partitioning}
While we have quite a few easy-to-use tools for working with balanced bipartite graphs, the graph we start with is not necessarily bipartite (when the starting graph \emph{is} bipartite, the existence problem is easy, and the counting problem is solved by \cite{Schrijver}). Therefore, perhaps the most natural thing to do is to partition the edges into `many' balanced bipartite graphs, where each piece has suitable expansion and regularity properties. The following lemma is our first step towards obtaining such a partition.  

\begin{lemma}
\label{lemma:finding initial partitions}
Fix $a \in (0,1)$, and let $G=(V,E)$ be a $d$-regular graph on $n$ vertices, where $d=\omega_{a}(1)$ 
and $n$ is an even and sufficiently large integer. Then, for every integer $t\in [d^{a/100}, d^{1/10}]$, there exists a collection $(A_i,B_i)_{i=1}^t$ of balanced bipartitions for which the following properties hold:
\begin{enumerate}[$(R1)$]
\item Let $G_i$ be the subgraph of $G$ induced by $E_G(A_i,B_i)$. For all $1\leq i\leq t $ we have
$$\frac{d}{2}-d^{2/3}\leq \delta(G_i)\leq \Delta(G_i)\leq \frac{d}{2}+d^{2/3}.$$
\item For all $e\in E(G)$, the number of indices $i$ for which $e\in E(G_i)$ is $\frac{t}{2}\pm t^{2/3}$.
\end{enumerate}
\end{lemma}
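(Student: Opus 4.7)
The plan is to construct the bipartitions probabilistically, splitting the analysis into two regimes depending on whether $d$ is larger or smaller than $n/2$.

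In the \emph{dense regime} $d \geq n/2$, I will simply choose each $(A_i,B_i)$ independently and uniformly at random from among all balanced bipartitions of $V$. For a fixed pair $(v,i)$, conditional on the side containing $v$, the degree $\deg_{G_i}(v)$ is hypergeometric with mean $d/2 + O(1)$; hence, by Chernoff's inequality together with \cref{rem:hyper}, $\Pr[|\deg_{G_i}(v)-d/2|>d^{2/3}] \leq 2\exp(-\Omega(d^{1/3}))$. For a fixed edge $e$, the indicator random variables of the events $\{e \in E(G_i)\}$ are independent across $i$ (as the partitions are chosen independently), each with mean $\frac{n}{2(n-1)}$, so Chernoff again gives $\Pr[|N_e - t/2| > t^{2/3}] \leq 2\exp(-\Omega(t^{1/3}))$, where $N_e := |\{i \in [t] : e \in E(G_i)\}|$. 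Because $d \geq n/2$ is at least linear in $n$, a straightforward union bound over the $O(n^2)$ bad events closes this case.

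In the \emph{sparse regime} $d < n/2$, the complement $G^c$ has minimum degree $\geq n/2$, so by Dirac's theorem $G^c$ contains a Hamilton cycle and, since $n$ is even, a perfect matching $M$ of $V$; crucially, no pair in $M$ is an edge of $G$. Fix such an $M = \{p_1, \dots, p_{n/2}\}$, and for each pair $p_j$ and each $i \in [t]$, flip an independent fair coin $Y_{j,i}$ to decide which endpoint of $p_j$ is placed into $A_i$; the resulting $(A_i,B_i)$ is automatically an exactly balanced bipartition. Since no pair of $M$ is an edge of $G$, for any vertex $v$ and any neighbor $u \in N_G(v)$, the events $\{v \in A_i\}$ and $\{u \in A_i\}$ depend on distinct coin flips and are therefore independent; hence $\deg_{G_i}(v) \sim \text{Bin}(d, 1/2)$, and the bad event $\mathcal{D}_{v,i}$ that (R1) fails at $(v,i)$ satisfies $\Pr[\mathcal{D}_{v,i}] \leq 2\exp(-\Omega(d^{1/3}))$. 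Likewise, for each edge $e = uv \in E(G)$, $\Pr[e \in E(G_i)] = 1/2$ independently across $i$, and the (R2)-failure event $\mathcal{E}_e$ satisfies $\Pr[\mathcal{E}_e] \leq 2\exp(-\Omega(t^{1/3}))$.

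To close the sparse case I invoke the asymmetric Lov\'asz Local Lemma (\cref{ALLL}). The event $\mathcal{D}_{v,i}$ depends only on the coin flips of the $\leq d+1$ pairs of $M$ touching $N[v]$ in partition $i$, and $\mathcal{E}_e$ depends only on the $2$ pairs containing the endpoints of $e$, across all $t$ partitions. A routine count then gives dependency degrees of $O(d^2)$ among same-partition $\mathcal{D}$-events (events in distinct partitions being automatically independent), $O(d^2)$ between $\mathcal{D}$-events and $\mathcal{E}$-events, $O(d)$ among $\mathcal{E}$-events, and $O(dt)$ between each $\mathcal{E}$-event and the $\mathcal{D}$-events it can influence. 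Assigning LLL weights $x_{v,i} := 2e\,\Pr[\mathcal{D}_{v,i}]$ and $y_e := 2e\,\Pr[\mathcal{E}_e]$, the LLL inequalities reduce to the estimates $d^2(\Pr[\mathcal{D}] + \Pr[\mathcal{E}]) = O(1)$ and $dt\,\Pr[\mathcal{D}] + d\,\Pr[\mathcal{E}] = O(1)$, both of which hold for $d$ sufficiently large in terms of $a$ once one uses $t \geq d^{a/100}$ (so that $\exp(-\Omega(t^{1/3})) \leq \exp(-\Omega(d^{a/300}))$). The main obstacle is exactly this simultaneous control of the two deviation scales $d^{2/3}$ and $t^{2/3}$ while keeping the dependency polynomial in $d$ and $t$; the edge-freeness of $M$, guaranteed by Dirac in the sparse regime, is what prevents pair-edges from deterministically inflating the (R2) count to $t$ and is what ultimately makes the pairing-plus-LLL approach go through cleanly.
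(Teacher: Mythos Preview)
Your approach is essentially the one in the paper: split into a dense and a sparse regime, handle the dense regime by independent uniform balanced bipartitions plus Chernoff and a union bound, and handle the sparse regime by pairing vertices via a perfect matching of non-edges, flipping one independent coin per pair per partition to obtain balanced bipartitions, and closing with the Local Lemma. The paper places the dense/sparse threshold at $d = \log^{1000} n$ rather than $d = n/2$, and uses the symmetric rather than the asymmetric Local Lemma, but these are cosmetic differences.

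There is one small wrinkle in your sparse case. You assert that $\deg_{G_i}(v) \sim \text{Bin}(d,1/2)$ because each neighbor $u$ of $v$ lies in a different $M$-pair from $v$; this is true, but two neighbors $u,u'$ of $v$ may be paired \emph{with each other} in your $M$, in which case their placements are perfectly anti-correlated and the degree distribution is not binomial. The fix is easy: grouping the neighbors of $v$ by their $M$-pair, the contribution of a pair containing two neighbors is deterministically $1$, and the remaining contributions are independent $\text{Bern}(1/2)$, so the mean is still exactly $d/2$ and Hoeffding yields the same tail bound $\exp(-\Omega(d^{1/3}))$. The paper sidesteps this entirely by choosing $M$ inside the auxiliary graph of pairs at distance at least $3$ in $G$ (such a matching exists because in their sparse regime $d \leq \log^{1000} n$ forces $n - d^2 \geq n/2$), so that no two neighbors of any vertex are ever paired together and the distribution really is binomial.
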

We will divide the proof into two cases -- the \emph{dense case}, where $\log^{1000}{n}\leq d \leq n-1$, and the \emph{sparse case}, where $ d \leq \log^{1000}{n} $. The underlying idea is similar in both cases, but the proof in the sparse case is technically more involved as a standard use of Chernoff's bounds and the union bound does not work (and therefore, we will instead use the Local Lemma).

\begin{proof}[Proof in the dense case]
Let $A_{1},\dots,A_{t}$ be random subsets chosen independently from the uniform distribution on all subsets of $[n]$ of size exactly $n/2$, and let $B_{i}=V\setminus A_{i}$ for all $1\leq i\leq t$. We will show that with high probability, for every $1\leq i\leq t$, $(A_{i},B_{i})$ is a balanced bipartition satisfying $(R1)$ and $(R2)$.

First, note that for any $e\in E(G)$ and any $i \in [t]$, $$\Pr\left[e\in E(G_{i})\right]=\frac{1}{2}\left(1+\frac{1}{n-1}\right).$$ Therefore, if for all $e\in E(G)$ we let $C(e)$ denote the set of indices $i$ for which $e\in E(G_{i})$, then 
$$\E \left[|C(e)|\right] = \frac{t}{2}\left(1+\frac{1}{n-1}\right).$$ 

Next, note that, for a fixed $e\in E(G)$, the events $A_i:=`i\in C(e)$' are mutually independent, and that $|C(e)|=\sum_i X_i$, where $X_i$ is the indicator random variable for the event $A_i$. Therefore, by Chernoff's bounds, it follows that   
$$\Pr\left[|C(e)|\notin \frac{t}{2} \pm t^{2/3}\right]\leq \exp\left(-t^{1/3}\right)\leq \frac{1}{n^3}.$$ 

Now, by applying the union bound over all $e \in E(G)$, it follows that the collection $(A_{i},B_{i})_{i=1}^{t}$ satisfies $(R2)$ with probability at least $1-1/n$. Similarly, it is immediate from Chernoff's inequality for the hypergeometric distribution that for any $v\in V$ and $i \in [t]$, $$\Pr\left[d_{G_i}(v)\notin \frac{d}{2} \pm d^{2/3}\right] \leq \exp\left(-\frac{d^{1/3}}{10}\right)\leq \frac{1}{n^3},$$ and by taking the union bound over all such $i$ and $v$, it follows that $(R1)$ holds with probability at least $1-1/n$. All in all, with probability at least $1-2/n$, both $(R1)$ and $(R2)$ hold. This completes the proof.
\end{proof}

\begin{proof}[Proof in the sparse case]
Instead of using the union bound as in the dense case, we will use the symmetric version of the Local Lemma (\cref{LLL}). 
Note that there is a small obstacle with choosing \emph{balanced} bipartitions, as the local lemma is most convenient to work with when the underlying experiment is based on independent trials. In order to overcome this issue, we start by defining an auxiliary graph $G'=(V,E')$ as follows: for all $xy\in \binom{V}{2}$, $xy\in E'$ if and only if $xy \notin E$ and there is no vertex $v\in V(G)$ with $\{x,y\}\subseteq N_G(v)$. In other words, there is an edge between $x$ and $y$ in $G'$ if and only if $x$ and $y$ are not connected to each other, and do not have any common neighbors in $G$.  
Since for any $x\in V$, there are at most $d^{2}$ many $y \in V$ such that $xy \in E$ or $x$ and $y$ have a common neighbor in $G$, it follows that $\delta(G') \geq n-d^{2}\geq n/2$ for $n$ sufficiently large. An immediate application of Hall's theorem shows that any graph on $2k$ vertices with minimum degree at least $k$ contains a perfect matching. Therefore, $G'$ contains a perfect matching. 

Let $s=n/2$ and let $M:=\{x_1y_1,\ldots,x_{s}y_s\}$ be an arbitrary perfect matching of $G'$.
For each $i\in[t]$ let $f_{i}$ be a random function chosen independently and uniformly from the set of all functions from $\{x_{1},\dots,x_{s}\}$ to $\{\pm 1\}$. These functions will define the partitions according to the vertex labels as follows: 
$$A_i:=\{x_j \mid f_i(x_j)=-1\}\cup \{y_j \mid f_i(x_j)=+1\},$$ and
$$B_i:=[n]\setminus V_i.$$ 
Clearly, $(A_i, B_i)_{i=1}^{t}$ is a random collection of balanced bipartitions of $V$. 
If, for all $i\in[t]$, we let $g_{i}\colon V(G)\to \{A,B\}$ denote the random function recording which of $A_i$ or $B_i$ a given vertex ends up in, it is clear -- and this is the point of using $G'$ -- that for any $i \in [t]$ and any $v\in V(G)$, the choices $\{g_i(w)\}_{w\in N_G(v)}$ are mutually independent. This will help us in showing that, with positive probability, this collection of bipartitions satisfies properties $(R1)$ and $(R2)$. 

Indeed, for all $v\in V(G)$, $i\in [t]$, and $e \in E(G)$, let 
$D_{i,v}$ denote the event that `$d_{G_i}(v)\notin \frac{d}{2}\pm d^{2/3}$', and let $A_{e}$ denote the event `$|C(e)| \notin \frac{t}{2} \pm t^{2/3}$'. Then, using the independence property mentioned above, Chernoff's bounds imply that 
$$\Pr[D_{i,v}] \leq \exp\left(-d^{1/3}/4\right)$$ and 
$$\Pr[A_{e}] \leq\exp\left(-t^{1/3}/4\right).$$ 

In order to complete the proof, we need to show that one can apply the symmetric local lemma (\cref{LLL}) to the collection of events consisting of all the $D_{i,v}$'s and all the $A_{e}$'s. To this end, we first need to upper bound the number of events which depend on any given event.

Note that $D_{i,v}$ depends on $D_{j,u}$ only if $i=j$ and $\text{dist}_G(u,v)\leq 2$ or $uv\in M$. Note also that $D_{i,v}$ depends on $A_{e}$ only if an end point of $e$ is within distance $1$ of $v$ either in $G$ or in $M$. Therefore, it follows that any $D_{i,v}$ can depend on at most $2d^{2}$ events in the collection. Since $A_{e}$ can depend on $A_{e'}$ only if $e$ and $e'$ share an endpoint in $G$ or if any of the endpoints of $e$ are matched to any of the endpoints in $M$, it follows that we can take the maximum degree of the dependency graph in \cref{LLL} to be $2d^{2}$. Since $2d^{2}\exp(-t^{1/3}/4)=o_d(1/e)$, we are done.      
\end{proof}

\section{Completion}
In this section, we describe the key ingredient of our proof, namely the completion step. Before stating the relevant lemma, we need the following definition. 
\begin{definition}
\label{defn-good-graph}
A graph $H=(A\cup B,E)$ is called $(\alpha,r,m)$-good if it
satisfies the following properties: 
\begin{enumerate}[$(G1)$]
\item $H$ is an $r$-regular, balanced bipartite graph with $|A|=|B|=m$.
\item Every balanced bipartite subgraph $H'=(A'\cup B',E')$ of $H$ with
$|A'|=|B'|\geq(1-\alpha)m$ and with $\delta(H')\geq(1-2\alpha)r$
contains a perfect matching. 
\end{enumerate}
\end{definition}
The motivation for this definition comes from the next proposition, which shows that a regular graph on an even number of vertices, which can be decomposed as a union of a good graph and a sufficiently sparse graph, has a 1-factorization. 
\begin{proposition}
\label{proposition-completion}
For every $\alpha\leq 1/10$, there exists an integer $r_0$ such that for all $r_0 \leq r_1$ and $m$ a sufficiently large integer, the following holds. Suppose that $H=(A\cup B, E(H))$ is an $(\alpha,r_1,m)$-good graph. Then, for every $r_2\leq \alpha^5r_1/\log{r_1}$, every $r:=r_1 + r_2$-regular (not necessarily bipartite) graph $R$ on the vertex set $A\cup B$, for which $H\subseteq R$, admits a 1-factorization. 
\end{proposition}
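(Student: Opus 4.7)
The plan is to edge-decompose $R$ into a small collection $P_1,\dots,P_k$ of perfect matchings together with a residual balanced bipartite $(r-k)$-regular graph; the latter admits a $1$-factorization by \cref{Hall-application-1fact-reg-bipartite}, so it suffices to arrange for the $P_j$'s to absorb every intra-part edge of $R$, i.e.\ every edge of $E(R[A])\cup E(R[B])$.

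Since $R\setminus H$ is $r_2$-regular, each of $R[A]$ and $R[B]$ has maximum degree at most $r_2$ and by \cref{Vizing's theorem} edge-decomposes into at most $r_2+1$ matchings; a handshaking count on $R\setminus H$ inside $A$ and inside $B$ gives $|E(R[A])|=|E(R[B])|$. We refine both decompositions into equal-sized paired sub-matchings $(\widetilde M^A_j,\widetilde M^B_j)_{j=1}^{k}$ as follows: lay out the $A$-matchings in a line and the $B$-matchings in a line of the same total length, cut both lines at all matching boundaries, and further subdivide so that no segment exceeds a chosen threshold $s$. This produces $k\leq r_2m/(2s)+O(r_2)$ pairs with $|\widetilde M^A_j|=|\widetilde M^B_j|\leq s$.

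For each $j$ we aim to take $P_j:=\widetilde M^A_j\cup\widetilde M^B_j\cup N_j$, where $N_j$ is a perfect matching in $H$ between $A\setminus V(\widetilde M^A_j)$ and $B\setminus V(\widetilde M^B_j)$ and the $N_j$'s are pairwise edge-disjoint. We construct them iteratively: at step $j$, work inside $H_{j-1}:=H\setminus\bigcup_{i<j}N_i$ and invoke property (G2) of goodness on the restriction of $H_{j-1}$ to $A\setminus V(\widetilde M^A_j)$ and $B\setminus V(\widetilde M^B_j)$. The side-size hypothesis of (G2) is satisfied once $s\leq\alpha m/2$; the min-degree at any $v\in A\setminus V(\widetilde M^A_j)$ in the restricted graph is at least $r_1-|N_H(v)\cap V(\widetilde M^B_j)|-(j-1)\geq r_1-2s-(j-1)$, which clears the $(1-2\alpha)r_1$ threshold once $2s+k\leq 2\alpha r_1$.

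Once all the $P_j$'s are produced, the graph $R\setminus\bigcup_j P_j$ is $(r-k)$-regular (each $P_j$ removes exactly one edge at every vertex), contains no intra-part edges, and is therefore a balanced bipartite regular graph, to which \cref{Hall-application-1fact-reg-bipartite} applies. The principal difficulty is the joint feasibility of the parameter constraints: covering all intra-part edges needs $ks\gtrsim r_2m/2$, while the goodness window forces $2s+k\leq 2\alpha r_1$, and together these yield $r_2m\lesssim \alpha^2 r_1^2$. The slack in the hypothesis $r_2\leq \alpha^5 r_1/\log r_1$ allows a choice like $s\approx \alpha r_1/4$ and $k\approx 3\alpha r_1/2$ to make the iteration go through, and the main work is in tracking the cumulative degree loss from the previously selected $N_i$'s so that (G2) holds uniformly at every step.
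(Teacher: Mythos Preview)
Your overall architecture is the same as the paper's, but the parameter accounting hides a genuine gap. Your degree estimate at step $j$ uses the trivial bound $|N_H(v)\cap V(\widetilde M^B_j)|\le 2s$, which forces you to take $s\lesssim \alpha r_1$. Combined with $k\ge r_2 m/(2s)$, the feasibility window $2s+k\le 2\alpha r_1$ then requires $r_2 m\lesssim \alpha^2 r_1^2$, exactly as you note. But the hypothesis $r_2\le \alpha^5 r_1/\log r_1$ does \emph{not} give this: plugging it in yields $r_2 m\le \alpha^5 r_1 m/\log r_1$, and this is $\lesssim \alpha^2 r_1^2$ only when $m\lesssim \alpha^{-3} r_1\log r_1$. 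The proposition imposes no upper bound on $m$ in terms of $r_1$ (and in the paper's application $m=n/2$ while $r_1\approx d/t$ can be as small as $m^{1/10}$ or smaller), so your choice $s\approx \alpha r_1/4$, $k\approx 3\alpha r_1/2$ simply cannot cover all the intra-part edges in the sparse regime.

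The missing idea is that you should not control $|N_H(v)\cap V(\widetilde M^B_j)|$ by the trivial size bound $2s$; instead you must \emph{choose} the sub-matchings so that each vertex of $H$ has at most $\approx \alpha r_1$ neighbours among the matched vertices, even when the sub-matching is much larger than $r_1$. The paper does this probabilistically: in the dense regime ($r_1\ge m^{1/10}$) a random sub-matching of the right size works via Hoeffding and a union bound over the $2m$ vertices; in the sparse regime ($r_1\le m^{1/10}$), where the union bound fails, each Vizing matching is randomly split into $\lfloor 1/\alpha^4\rfloor$ labelled pieces and the Lov\'asz Local Lemma guarantees that no vertex sees $\ge \alpha r_1$ matched vertices in any one piece. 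With that refinement the number of rounds is $O(r_2/\alpha^4)$ (sparse case) or $O(r_2\log m/\alpha)$ (dense case), both comfortably below $\alpha r_1$ under the stated bound on $r_2$, and your completion argument then goes through.
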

For clarity of exposition, we first provide the somewhat simpler proof (which already contains all the ideas) of this proposition in the `dense' case, and then we proceed to the `sparse' case.
\begin{proof}[Proof in the dense case: $m^{1/10}\leq r_1\leq m$]
First, observe that $e(R[A])=e(R[B])$. Indeed, as $R$ is $r$-regular, we have for $X\in \{A,B\}$ that 
$$rm=\sum_{v\in X}d_R(v)=2e(R[X])+e(R[A,B]),$$ from which the above equality follows. Moreover, $\Delta(R[X])\leq r_2$ for all $X\in \{A,B\}$.
Next, let $R_0:=R$ and $f_0:=e(R_0[A])=e(R_0[B])$.  
By Vizing's theorem (\cref{Vizing's theorem}), both $R_0[A]$ and $R_0[B]$ contain matchings of size exactly $\lceil f_0/(r_2+1)\rceil$. Consider any two such matchings $M_A$ in $A$ and $M_B$ in $B$, and for $X\in \{A,B\}$, let $M'_X\subseteq M_X$, 
denote a matching of size $|M'_X| =\lfloor \alpha f_0/2r_2 \rfloor$ such that no vertex $v \in V(H)$ is incident to more than $3\alpha r_1/2$ vertices which are paired in the matching. To see that such an $M'_X$ must exist, we use a simple  probabilistic argument -- for a random subset $M'_X\subseteq M_X$ of this size, by a simple application of Hoeffding's inequality and the union bound, we obtain that $M'_X$ satisfies the desired property, except with probability at most $2m\exp(-\alpha^{2}r_1/8)\ll 1$.    

Delete the \emph{vertices} in $\left(\cup M'_A\right)\bigcup \left(\cup M'_B\right)$, as well as any edges incident to them, from  $H$ and denote the resulting graph by $H'=(A'\cup B',E')$. Since $|A'|=|B'|\geq (1-\alpha)|A|$ and $\delta(H')\geq (1-3\alpha/2)r_1$ by the choice of $M'_X$, it follows from $(G2)$ that $H'$ contains a perfect matching $M'$. Note that $M_0:=M'\cup M'_A\cup M'_B$ is a perfect matching in $R_0$. We repeat this process with $R_1:=R_0 - M_1$ (deleting only the edges in $M_1$, and not the vertices) and $f_1 :=e(R_1[A])=e(R_1[B])$ until we reach $R_k$ and $f_k$ such that $f_k\leq r_2$. Since $f_{i+1} \leq \left(1-\alpha/3r_2\right)f_i$, this must happen after at most $3r_2\log{m}/\alpha \ll \alpha^{2} r_1$ steps.  Moreover, since $\deg(R_{i+1}) = \deg(R_i)-1$, it follows that during the first $\lceil 3r_2\log {m}/\alpha\rceil$ steps of this process, the degree of any $R_j$ is at least $r_1 - \alpha^2 r_1$. Therefore, since $(r_1 - \alpha^2r_1) - 3\alpha r_1/2 \geq (1-2\alpha)r_1$, we can indeed use $(G2)$ throughout the process, as done above.  

From this point onwards, we continue the above process (starting with $R_k$) with matchings of size one i.e. single edges from each part, until no more edges are left. By the choice of $f_k$, we need at most $r_2$ such iterations, which is certainly possible since $r_2 + 3r_2\log{m}/\alpha \ll \alpha^{2} r_1$. After removing all the perfect matchings obtained via this procedure, we are left with a regular, balanced, \emph{bipartite} graph, and therefore it admits a 1-factorization (\cref{Hall-application-1fact-reg-bipartite}). Taking any such 1-factorization along with all the perfect matchings that we removed gives a 1-factorization of $R$.     
\end{proof}
\begin{proof}[Proof in the sparse case: $r_1\leq m^{1/10}$]
Let $C$ be any integer and let $k = \lfloor 1/\alpha^{4}\rfloor$. 
We begin by showing that any matching $M$ in $X\in\{A,B\}$ with $|M|=C$ can be partitioned into $k$ matchings $M_1,\dots, M_k$ 
such that no vertex $v\in V(H)$ is incident to more than $\alpha r_1$ vertices in $\cup M_i$ for any $i\in[k]$. If $C<\alpha r_1/2$, then there is nothing to show. If $C \geq \alpha r_1/2$, consider an arbitrary partition of $M$ into $\lceil C/k\rceil $ sets $S_1,\dots,S_{\lceil C/k\rceil}$ 
with each set (except possibly the last one) of size $k$. For each $S_{j}$, $j\in \lfloor C/k \rfloor$, choose a permutation of $[k]$ independently and uniformly at random, and let $M_i$ denote the random subset of $M$ consisting of all elements of $S_1,\dots,S_{\lceil C/k \rceil}$ which are assigned the label $i$. We will show, using the symmetric version of the Local Lemma (\cref{LLL}), that the decomposition $M_1,\dots,M_k$ satisfies the desired property with a positive probability.

To this end, note that for any vertex $v$ to have at least $\alpha r_1$ neighbors in some $M_i$, it must be the case that the $r_1$ neighbors of $v$ in $H$ are spread throughout at least $\alpha r_1$ distinct $S_j$'s. Let $D_v$ denote the event that $v$ has at least $\alpha r_1$ neighbors in some matching $M_i$. Since $v$ has at least $\sqrt{k}$ neighbors in at most $r_1/\sqrt{k} \ll \alpha r_1$ distinct $S_j$'s, it follows that $\Pr[D_v] \leq k(1/\sqrt{k})^{\alpha r_1/2}$. Finally, since each $D_v$ depends on at most $r^{2}k$ many other $D_w$'s, and since $k^{2}r^{2}(1/\sqrt{k})^{\alpha r_1/2} <1/e$, we are done.

Now, as in the proof of the dense case, we have $e(R[A])=e(R[B])$ and $\Delta(R[X])\leq r_2$ for $X\in\{A,B\}$. By Vizing's theorem, we can decompose $R[A]$ and $R[B]$ into exactly $r_2+1$ matchings each, and it is readily seen that these matchings can be used to decompose $R[A]$ and $R[B]$ into at most $\ell \leq 2(r_2+1)$ matchings $M^{A}_1,\dots,M^{A}_\ell$ and $M^{B}_1,\dots,M^{B}_\ell$ such that $|M^{A}_i|=|M^{B}_i|$ for all $i\in[\ell]$. Using the argument in the previous paragraph, we can further decompose each $|M^{X}_i|$, $X\in\{A,B\}$ into at most $k$ matchings each such that no vertex $v\in V(H)$ is incident to more than $\alpha r_1$ vertices involved in any of these smaller matchings. Since $4r_2/\alpha^{4} \ll r_1$, the rest of the argument proceeds exactly as in the dense case.     
\end{proof}
\begin{remark}
\label{rmk:counting-completion}
In the last step of the proof, we are allowed to choose an arbitrary $1$-factorization of an $r'$-regular, balanced bipartite graph, where $r'\geq r_1 - r_2$. Therefore, using \cref{Schrijver-lower-bound-1-fact} along with the standard inequality $d! \geq (d/e)^d$, it follows that $R$ admits at least $\left((r_{1}-r_{2})/e^{2}\right)^{(r_{1}-r_{2})m}$ $1$-factorizations. 
\end{remark}

\section{Finding good subgraphs which almost cover $G$}
\label{section-structuralresult}
In this section we present a structural result which shows that a  `good' regular expander on an even number of vertices can be `almost' covered by a union of edge disjoint good subgraphs.  
\begin{proposition}
\label{existence-good-graph}
For every $c>0$ there exists $d_0$ such that for all $d\geq d_0$ the following holds. Let $G=(V,E)$ be an $(n,d,\lambda)$-graph with $\lambda < d/4t^{4}$
where $t$ is an integer in $[d^{c/100},d^{c/10}] $. 
Then, $G$ contains $t$ distinct, edge disjoint $\left(\alpha, \lfloor\bar{r}\rfloor, \frac{n}{2}\right)$-good subgraphs $W_{1},\dots,W_{t}$ with $\alpha=\frac{1}{10}$ and $\bar{r}=\frac{d}{t}\left(1-\frac{16}{t^{1/3}}\right)$.
\end{proposition}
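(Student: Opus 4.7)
The plan is to use the bipartitions from \cref{lemma:finding initial partitions} as a skeleton, randomly distribute each edge of $G$ to one of the bipartitions that it crosses, extract an $\bar{r}$-factor in each resulting piece, and verify the goodness property via Hall's theorem combined with the \cref{Expander Mixing Lemma} and concentration.

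First, apply \cref{lemma:finding initial partitions} with parameter $a=c$ to obtain bipartitions $(A_i, B_i)_{i=1}^{t}$ satisfying (R1) and (R2); let $G_i$ denote the bipartite subgraph of $G$ induced by $E_G(A_i, B_i)$. For each edge $e \in E(G)$ set $C(e) := \{i : e \in E(G_i)\}$; by (R2), $|C(e)| = t/2 \pm t^{2/3}$, so in particular $C(e) \neq \emptyset$. Independently for each $e$, pick $X(e) \in C(e)$ uniformly at random, and let $G_i'$ be the bipartite graph on $(A_i, B_i)$ consisting of the edges $\{e : X(e) = i\}$. The graphs $G_1', \dots, G_t'$ are pairwise edge-disjoint subgraphs of $G$.

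Second, the indicators $\mathbf{1}[X(e) = i]$ are independent over $e$ with probability $1/|C(e)| = (2/t)(1 \pm O(t^{-1/3}))$, so by Hoeffding's inequality and the union bound over $i \in [t]$, vertices $v$, and pairs $(X,Y)$ with $\min\{|X|,|Y|\} \geq n/t^2$, we obtain with positive probability: every $G_i'$ has degrees in $(d/t)(1 \pm O(t^{-1/3}))$; and $e_{G_i'}(X, Y) = (1 \pm O(t^{-1/3})) \cdot e_G(X, Y)/t$ for every relevant pair. Combining the lower bound with \cref{Expander Mixing Lemma} on $G$ verifies the hypotheses of \cref{finding-large-factors} with $r = (d/t)(1 - O(t^{-1/3}))$, $\beta_1, \beta_2, \beta_3 = O(t^{-1/3})$, and $\gamma$ selected so that $\lfloor r(1-\gamma) \rfloor = \lfloor \bar{r} \rfloor$; (P2) in particular follows from $e_{G_i'}(X, Y) \leq e_G(X, Y) \leq d|X|^2/n + \lambda|X|$ when $|X| = |Y|$ is small. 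This extracts an $\bar{r}$-factor $W_i$ of $G_i'$, yielding (G1) in \cref{defn-good-graph}.

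To verify (G2), let $H' = (A' \cup B', E')$ be a balanced bipartite subgraph of $W_i$ with $|A'| = |B'| \geq (1-\alpha)n/2$, $\delta(H') \geq (1-2\alpha)\bar{r}$, and suppose $H'$ has no perfect matching. By \cref{Hall}, there exists $X \subseteq A'$ with $|X| \leq n/4$ and $|N_{H'}(X)| < |X|$; set $Y = N_{H'}(X)$. Then $e_{H'}(X, Y) \geq (1-2\alpha)\bar{r}|X|$, and $e_{H'}(X, Y) \leq e_G(X, Y) \leq d|X||Y|/n + \lambda\sqrt{|X||Y|}$ together with $|Y| \leq |X|$ forces $|Y| \geq ((1-2\alpha)\bar{r} - \lambda)n/d = (1-2\alpha)(1-o(1))n/t$, so $|X|, |Y| \geq n/t^2$ and the concentration estimate applies. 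Combining it once more with \cref{Expander Mixing Lemma} and $|Y| \leq n/4$, $\lambda \leq d/(4t^4)$, we obtain
\[
(1-2\alpha)\bar{r}|X| \leq e_{W_i}(X, Y) \leq e_{G_i'}(X, Y) \leq (1+o(1))\bigl(d|X||Y|/(nt) + \lambda\sqrt{|X||Y|}/t\bigr) \leq \bar{r}|X|/4 + o(\bar{r}|X|).
\]
For $\alpha = 1/10$ this gives $0.8(1-o(1)) \leq 1/4 + o(1)$, a contradiction. Hence each $W_i$ is $(\alpha, \lfloor \bar{r} \rfloor, n/2)$-good, and the $W_i$ are pairwise edge-disjoint by construction. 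The main obstacle is choosing parameters so that the concentration estimate is uniform across all relevant scales, sharp enough in its leading constant to close the large-$|X|$ regime in the verification of (G2), and also sufficient as input to \cref{finding-large-factors}; the hypothesis $\lambda \leq d/(4t^4)$ is calibrated precisely so that the $\lambda$-dependent error terms remain subdominant throughout.
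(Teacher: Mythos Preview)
Your approach mirrors the paper's: apply \cref{lemma:finding initial partitions}, randomly assign each edge to one of the bipartitions it crosses, use \cref{finding-large-factors} to extract $\bar r$-factors, and verify (G2) via \cref{Hall} and the \cref{Expander Mixing Lemma}. The structure is right, and the (G2) verification is essentially correct (modulo a harmless factor of $2$: since $|C(e)|\approx t/2$, the concentration gives $e_{G_i'}(X,Y)\approx (2/t)\,e_G(X,Y)$, not $(1/t)\,e_G(X,Y)$; the contradiction then reads $0.8\le 0.5+o(1)$ rather than $0.8\le 0.25+o(1)$, which still closes).

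The genuine gap is in the sentence ``by Hoeffding's inequality and the union bound over $i\in[t]$, vertices $v$, and pairs $(X,Y)$ \dots\ we obtain with positive probability''. The union bound over pairs is fine: with $\min\{|X|,|Y|\}\ge n/t^2$ each tail probability is $\exp(-\Omega(dn/t^{O(1)}))$, which beats $4^n$ once $d\ge d_0$. But the union bound over the $nt$ degree events fails in the sparse regime. Each event $D_{i,v}=\{\deg_{G_i'}(v)\notin (d/t)(1\pm O(t^{-1/3}))\}$ has probability at most $\exp(-\Omega(d/t^{5/3}))$, and since the proposition allows $d\ge d_0$ with $d_0$ a constant independent of $n$, the product $nt\cdot\exp(-\Omega(d/t^{5/3}))$ need not be below $1$; indeed it diverges when $d$ is fixed and $n\to\infty$. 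Without the degree bound you cannot feed $(P1)$ into \cref{finding-large-factors}, and the extraction of the $\bar r$-factors collapses.

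The paper handles exactly this point by packaging the degree events $D_{i,v}$ together with the edge-count events into a single application of the asymmetric Lov\'asz Local Lemma (\cref{ALLL}), obtaining success probability at least $e^{-nt}$; the remaining upper bound on $e_{G_i'}(X,Y)$ for $n/(2t^2)\le |X|=|Y|\le n/4$ is then established by a separate union bound whose failure probability is below $e^{-nt}/2$. Replacing your union bound over vertices by this Local Lemma step would repair the argument.
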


The proof of this proposition is based on the following technical lemma, which lets us apply \cref{gale} to each part of the partitioning coming from \cref{lemma:finding initial partitions} in order to find large good factors. 
\begin{lemma}
\label{lemma-almost-factorization-technical}
There exists an edge partitioning $E(G)=E_1\cup \ldots E_{t}$ for which the following properties hold:
\begin{enumerate}[$(S1)$]
\item $H_i:=G[E_i]$ is a balanced bipartite graph with parts $(A_i,B_i)$ for all $i\in[t]$. 
\item For all $i\in[t]$ and for all $X\subseteq A_{i}$, $Y\subseteq B_{i}$ with $|X|=|Y| \leq n/2t^{2}$ we have $$e_{H_i}(X,Y) < d|X|/t^{2}.$$ 
\item For all $i\in [t]$ and all $X\subseteq A_i$, $Y\subseteq B_i$ with $|X|+|Y|>n/2$ and $\min\{|X|,|Y|\}>\frac{n}{2t^{2}}$, $e_{H_i}(X,Y) \geq 2\frac{d}{t}\left(1-\frac{8}{t^{1/3}}\right)\frac{|X||Y|}{n}.$ 
\item $d_{H_i}(v)\in \frac{d}{t}\pm \frac{8d}{t^{4/3}}$ for all $i\in[t]$ and all $v\in V(H_i)=V(G)$. 
\item $e_{H_i}(X,Y)\leq(1-4\alpha)\frac{d}{t}|X|$ for all $X,Y\subseteq V(H_i)$ with $\frac{n}{2t^{2}} \leq |X|=|Y|\leq \frac{n}{4}$.
\end{enumerate}
\end{lemma}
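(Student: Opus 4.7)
The plan is to apply \cref{lemma:finding initial partitions} with $a=c$ to obtain balanced bipartitions $(A_i,B_i)_{i=1}^{t}$ of $V$ satisfying (R1) and (R2), and then to partition $E(G)$ by independently and uniformly assigning each edge $e\in E(G)$ to one of the indices in $C(e):=\{i\in[t]:e\in E_G(A_i,B_i)\}$ (a set of size $t/2\pm t^{2/3}$ by (R2)). Writing $E_i$ for the set of edges assigned to $i$ and $H_i:=(V(G),E_i)$, property (S1) is automatic because $E_i\subseteq E_G(A_i,B_i)$, so $H_i$ is bipartite with parts $A_i,B_i$.

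Property (S2) is \emph{deterministic} via the Expander Mixing Lemma: for any $X\subseteq A_i$, $Y\subseteq B_i$ with $|X|=|Y|\leq n/(2t^{2})$,
\[
e_{H_i}(X,Y)\leq e_G(X,Y)\leq \frac{d|X||Y|}{n}+\lambda\sqrt{|X||Y|}\leq \frac{d|X|}{2t^{2}}+\frac{d|X|}{4t^{4}}<\frac{d|X|}{t^{2}},
\]
using $|Y|\leq n/(2t^{2})$, $\sqrt{|X||Y|}\leq|X|$, and $\lambda\leq d/(4t^{4})$.

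The remaining properties (S3), (S4), (S5) will be verified via Hoeffding's inequality followed by a union bound. Each of the relevant random quantities is a sum of independent Bernoullis indicating whether a given edge of $G_i$ was assigned to index $i$, each with success probability $1/|C(e)|=(2/t)(1\pm O(t^{-1/3}))$. Combined with (R1) and the Expander Mixing Lemma (which, thanks to $\lambda\leq d/(4t^{4})$, contributes at most a $(1\pm 1/(2t^{2}))$ multiplicative spectral error whenever $\min\{|X|,|Y|\}\geq n/(2t^{2})$), one obtains
\[
\mathbb{E}\bigl[d_{H_i}(v)\bigr]\in \tfrac{d}{t}\pm O\bigl(\tfrac{d}{t^{4/3}}\bigr)\qquad\text{and}\qquad\mathbb{E}\bigl[e_{H_i}(X,Y)\bigr]\geq \tfrac{2d|X||Y|}{nt}\bigl(1-O(t^{-1/3})\bigr)
\]
for (S4) and (S3) respectively, while for (S5) splitting $e_{H_i}(X,Y)=e_{H_i}(X\cap A_i,Y\cap B_i)+e_{H_i}(X\cap B_i,Y\cap A_i)$ and applying the Expander Mixing Lemma to each term yields the crude bound $\mathbb{E}[e_{H_i}(X,Y)]\leq(1+o(1))\,d|X|/(2t)$, comfortably below $(1-4\alpha)d|X|/t=0.6\,d|X|/t$. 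Hoeffding's inequality then supplies tails of order $\exp(-\Omega(d/t^{8/3}))$ for (S4), $\exp(-\Omega(dn/t^{20/3}))$ for (S3), and $\exp(-\Omega(dn/t^{4}))$ for (S5); each easily dominates the corresponding union bound ($nt$ events for (S4), and at most $4^{n}$ pairs $(X,Y)$ for (S3) and (S5)) given the range $t\leq d^{c/10}$.

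The main obstacle is the tight budget on error terms: the $(1-8/t^{1/3})$-factor in (S3) and the $\pm 8d/t^{4/3}$ window in (S4) leave only $O(t^{-1/3})$ of slack, which must be shared between the expectation error (from (R1), (R2), and the Expander Mixing Lemma) and the Hoeffding concentration. The hypothesis $\lambda\leq d/(4t^{4})$ is precisely what makes the expander-mixing contribution vanish at the relevant scales, so the real work is the Hoeffding concentration --- which is straightforward given the polynomial gap between $d$ and $t^{20/3}$.
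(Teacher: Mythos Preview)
Your setup, the treatment of (S1) and (S2), and the Hoeffding/union-bound arguments for (S3) and (S5) are all correct and match the paper. The gap is in (S4).

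You claim that the tail $\exp(-\Omega(d/t^{8/3}))$ ``easily dominates'' a union bound over the $nt$ degree events. This requires $d/t^{8/3}\gg\log(nt)$, hence in particular $d\gtrsim\log n$. But the lemma (via \cref{existence-good-graph}) must hold for \emph{all} $d\ge d_0$, where $d_0$ is a constant depending only on $c$, not on $n$. When $d$ is bounded and $n\to\infty$, the quantity $nt\exp(-\Omega(d/t^{8/3}))$ diverges and your union bound says nothing. This is precisely the same obstruction that forced the dense/sparse case split in \cref{lemma:finding initial partitions}.

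The paper handles this by replacing the union bound for (S4) with the Lov\'asz Local Lemma: each degree event $D_{i,v}$ depends on at most $td^{2}$ other degree events (only those $D_{j,w}$ with $\operatorname{dist}_G(v,w)\le 2$), and $td^{2}\exp(-d/t^{5/3})\ll 1$ holds for $d\ge d_0$ regardless of $n$. Because the edge-count events $A(i,X,Y)$ for (S3) are \emph{not} independent of the degree events, the paper packages (S3) and (S4) together in a single application of the \emph{asymmetric} Local Lemma (\cref{ALLL}), obtaining $\Pr[(S3)\cap(S4)]\ge e^{-nt}$; it then shows separately that (S5) fails with probability at most $e^{-nt}/2$ and subtracts. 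Your argument is salvageable in the regime $d\ge\operatorname{polylog}(n)$, but for the full range you need the Local Lemma for the degree events.
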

Before proving this lemma, let us show how it can be used to prove \cref{existence-good-graph}.

\begin{proof}[Proof of \cref{existence-good-graph}]
Note that each balanced bipartite graph $H_1,\dots,H_t$ coming from \cref{lemma-almost-factorization-technical} satisfies the hypotheses of \cref{gale} with 
$$r=\frac{d}{t}, \varphi = \frac{2rt^{2}}{n}, \beta_{1}=\beta_{2}=\beta_{3}=\frac{8}{t^{1/3}}, \gamma=\frac{16}{t^{1/3}}.$$
Indeed, $(P1)$ follows from $(S4)$, $(P2)$ follows from $(S2)$, $(P3)$ follows from $(S3)$ and $(P4)$ is satisfied by the choice of parameters. Therefore, \cref{gale} guarantees that each $H_i$ contains an $\lfloor \bar{r}\rfloor$-factor, and by construction, these are edge disjoint.

Now, let $W_1,\dots,W_t$ be any $\lfloor\bar{r} \rfloor$-factors of $H_1,\dots,H_t$. 
It remains to check that $W_1,\dots,W_t$ satisfy property $(G2)$. We will actually show the stronger statement that $H_1,\dots,H_t$ satisfy $(G2)$. 
Indeed, let $H'_i=(A'_i\cup B'_i,E'_i)$ be a subgraph of $H_i$ with $A'_i\subseteq A_i$, $B'_i\subseteq B_i$ such that 
$$|A'_i|=|B'_i|\geq(1-\alpha)n/2$$ and 
$$\delta(H'_i)\geq(1-2\alpha)\lfloor\bar{r}\rfloor.$$
Suppose $H'_i$ does not contain a perfect matching. Then, by \cref{Hall}, without loss of generality, there must exist $X\subseteq A_i$ and
$Y\subseteq B_i$ such that 
$$|X|=|Y|\leq|A_i|/2\leq n/4$$ and 
$$N_{H'_i}(X)\subseteq Y.$$ 
In particular, by the minimum degree assumption, it follows that 
$$e_{H'_i}(X,Y)\geq(1-2\alpha)\lfloor\bar{r}\rfloor|X|.$$
On the other hand, we know that for such a pair, 
$$e_{H'_i}(X,Y)\leq e_{G}(X,Y)\leq d|X|^{2}/n+\lambda|X|.$$ 
Thus, we must necessarily have that $|X| \geq n/2t^{2}$, which contradicts $(S5)$. This completes the proof. 
\end{proof}

\begin{proof}[Proof of \cref{lemma-almost-factorization-technical}]
Our construction will be probabilistic. We begin by applying \cref{lemma:finding initial partitions} to $G$ to obtain a collection of balanced bipartitions $(A_i,B_i)_{i=1}^{t}$ satisfying Properties $(R1)$ and $(R2)$ of \cref{lemma:finding initial partitions}. Let $G_i:=G[A_i,B_i]$, and for each $e\in E(G)$, let $C(e)$ denote the set of indices $i\in[t]$ for which $e\in E(G_i)$. Let $\{c(e)\}_{e\in E(G)}$ denote a random collection of elements of $[t]$, where each $c(e)$ is chosen independently and uniformly at random from $C(e)$.
Let $H_i$ be the (random) subgraph of $G_i$ obtained by keeping all the edges $e$ with $c(e)=i$. Then, the $H_i$'s always form an edge partitioning of $E(G)$ into $t$ balanced bipartite graphs with parts $(A_i,B_i)_{i=1}^{t}$. 

It is easy to see that these $H_i$'s will always satisfy $(S2)$. Indeed, if for any $X,Y\subseteq V(G)$ with $|X|=|Y|$, we have $e_{H_i}(X,Y)\geq d|X|/t^{2}$,
then since 
$$e_{H_i}(X,Y)\leq e_G(X,Y)\leq \frac{d|X|^{2}}{n}+\lambda |X|$$
by the Expander Mixing Lemma (\cref{Expander Mixing Lemma}), it follows that 
$$\frac{d}{t^{2}}\leq \frac{d|X|}{n} +\lambda,$$ and therefore, we must have 
$$|X|> \frac{n}{2t^{2}}.$$

We now provide a lower bound on the probability with which this partitioning also satisfies $(S3)$ and $(S4)$. 
To this end, we first define the following events: 
\begin{itemize}
\item For all $v\in V(G)$ and $i\in [t]$, let $D_{i,v}$ denote the event that $d_{H_i}(v)\notin \frac{d}{t}\pm \frac{8d}{t^{4/3}}$. 
\item For all $i\in [t]$ and all $X\subseteq A_i$, $Y\subseteq B_i$ with $|X|+|Y|>n/2$ and $\min\{|X|,|Y|\}>\frac{n}{2t^{2}}$, let $A(i,X,Y)$ denote the event that $e_{H_i}(X,Y)\leq 2\frac{d}{t}\left(1-\frac{8}{t^{1/3}}\right)\frac{|X||Y|}{n}$
\end{itemize}


Next, we wish to upper bound the probability of occurrence for each of these events. 

Note that for all $i\in[t]$ and $v\in V(G)$, it follows from $(R1)$ and $(R2)$ that $$\mathbb{E}[d_{H_i}(v)]\in \frac{d/2\pm d^{2/3}}{t/2\mp t^{2/3}}\in \frac{d}{t} \pm \frac{4d}{t^{4/3}}.$$

Therefore, by Chernoff's inequality, we get that for all $i\in[t]$ and $v\in V(G)$, 
\begin{equation}
\label{bound on Dv}
\Pr[D_{i,v}]\leq \exp\left(-\frac{d}{t^{5/3}}\right).
\end{equation}

Moreover, for all $i\in[t]$ and for all $X\subseteq A_i$, $Y\subseteq B_i$ with $|X|+|Y|>n/2$ and $\min\{|X|,|Y|\}>\frac{n}{2t^{2}}$, we have from the expander mixing lemma and $(R2)$ that 
$$\mathbb{E}[e_{H_i}(X,Y)]\geq2\frac{d}{t}\left(1-\frac{4}{t^{1/3}}\right)\frac{|X||Y|}{n}.$$



Therefore, by Chernoff's bounds, we get that for $i\in[t]$ and all such $X,Y$,
\begin{equation}
\label{bound on edges between X and Y}
\Pr\left[A(i,X,Y)\right]\leq \exp\left(-\frac{d|X||Y|}{nt^{8/3}}\right).
\end{equation}

Now, we apply the asymmetric version of the Local Lemma (\cref{ALLL}) as follows: our events consist of all 
the previously defined $D_{i,v}$'s and $A(j,X,Y)$'s. 
Note that each $D_{i,v}$ depends only on those $D_{j,w}$ for which $\text{dist}_{G}(v,w)\leq 2$. In particular, each $D_{i,v}$ depends on at most $td^{2}$ many $D_{j,w}$. Moreover, we assume that $D_{i,v}$ depends on all the events $A(j,X,Y)$ and that each $A(j,X,Y)$ depends on all the other events. 
For convenience, let us enumerate all the events as $\mathcal E_k$, $k=1,\ldots \ell$. For each $k\in[\ell]$, let $x_k$ be $\exp\left(-\sqrt{d}\right)$ if $\mathcal E_k$ is of the form $D_{j,v}$, and $x_k$ be $\exp\left(-\sqrt{d}|X||Y|/n\right)$ if $\mathcal E_k$ is of the form $A(j,X,Y)$.
To conclude the proof, we verify that $$\Pr[\mathcal E_k]\leq x_k\prod_{j\sim k}(1-x_j)$$ for all $k$. 
Indeed, if $\mathcal E_k$ is of the form $D_{j,v}$ then we have
\begin{align*}
e^{-\sqrt{d}}\left(1-e^{-\sqrt{d}}\right)^{td^{2}}\prod_{x,y}\left(1-e^{-\sqrt{d}xy/n}\right)^{{n \choose x}{n \choose y}} & \geq e^{-\sqrt{d}}e^{-2td^{2}e^{-\sqrt{d}}}\prod_{x,y}\left(e^{-2e^{-\sqrt{d}xy/n}}\right)^{{n \choose x}{n \choose y}}\\
 & \geq e^{-\sqrt{d}}e^{-2td^{2}/e^{\sqrt{d}}}\prod_{x,y}\exp\left(-2e^{-\sqrt{d}n/8t^{2}}{n \choose x}{n \choose y}\right)\\
 & \geq e^{-\sqrt{d}}e^{-2td^{2}/e^{\sqrt{d}}}\exp\left(-e^{-\sqrt{d}n/8t^{2}}2^{3n}\right)\\
 & >\Pr[\mathcal{E}_{k}].
\end{align*}


On the other hand, if $\mathcal E_k$ is of the form $A(j,X,Y)$, then we have 
\begin{align*}
e^{-\sqrt{d}xy/n}(1-e^{-\sqrt{d}})^{nt}\prod_{x,y}(1-e^{-\sqrt{d}xy/n})^{{n \choose x}{n \choose y}} & \geq e^{-\sqrt{d}xy/n}e^{-2e^{-\sqrt{d}}nt}\prod_{x,y}\left(e^{-2e^{-\sqrt{d}xy/n}}\right)^{{n \choose x}{n \choose y}}\\
 & \geq e^{-\sqrt{d}xy/n-2e^{-\sqrt{d}}nt}\prod_{x,y}\exp\left(-2e^{-\sqrt{d}n/8t^{2}}{n \choose x}{n \choose y}\right)\\
 & \geq e^{-\sqrt{d}xy/n-2e^{-\sqrt{d}}nt}\exp\left(-e^{-\sqrt{d}n/8t^{2}}2^{3n}\right)\\
 & >\Pr[\mathcal{E}_{k}].
\end{align*}
Therefore, by the asymmetric version of the Local Lemma, Properties $(S3)$ and $(S4)$ are satisfied with probability at least $$\left(1-e^{-\sqrt{d}}\right)^{nt}\prod_{x,y}(1-e^{-\sqrt{d}xy/n})^{{n \choose x}{n \choose y}}\geq e^{-nt}.$$

To complete the proof, it suffices to show that the probability that $(S5)$ is not satisfied is less than $\exp(-nt)$. To see this, fix $i\in [t]$ and $X,Y\subseteq V(H_i)$ with  $n/2t^{2} \leq|X|=|Y|\leq n/4$. By the expander mixing lemma,  
we know that $$e_{G}(X,Y)\leq d|X|^{2}/n+ \lambda|X| \leq d|X|/4 + \lambda |X|,$$ so by $(R2)$ we get  
$$\mathbb{E}[e_{H_i}(X,Y)]\leq\frac{d}{2t}\left(1+\frac{4}{t^{1/3}}\right)|X|.$$
Therefore,
by Chernoff's bounds, it follows that
$$\Pr\left[e_{H_i}(X,Y)\geq(1-4\alpha)d|X|/t\right] \leq \exp\left(-d|X|/50t\right) \leq \exp\left(-dn/100t^{3}\right).$$
Applying the union bound over all $i\in [t]$, and all such $X,Y\subseteq V(G)$, implies that the probability for $(S5)$ to fail is at most $\exp(-dn/200t^{3})<\exp(-nt)/2$. This completes the proof.
\end{proof}
\begin{remark}
\label{rmk:counting}
The above proof shows that there are at least $\frac{1}{2}\exp(-nt)\left(\frac{t}{2}-t^{2/3}\right)^{nd/2}$ (labeled) edge partitionings satisfying the conclusions of \cref{lemma-almost-factorization-technical}.
\end{remark}

\section{Proofs of \cref{main:pseudorandom} and \cref{main:counting}}
In this section, by putting everything together, we obtain the proofs of our main results. 

\begin{proof}[Proof of \cref{main:pseudorandom}]
Let $c=\varepsilon/10$, and apply \cref{existence-good-graph} with $\alpha=1/10$, $c$, and $t$ being an odd integer in $[d^{c/100},d^{c/10}]$ to obtain $t$ distinct, edge disjoint $\left(\alpha,r,\frac{n}{2}\right)$-good graphs $W_1,\dots,W_t$, where $r=\lfloor\frac{d}{t}\left(1-\frac{16}{t^{1/3}}\right) \rfloor$. Let $G':=G\setminus \bigcup_{i=1}^{t}W_i$, and note that $G'$ is $r':=d-rt$ regular. After possibly replacing $r$ by $r-1$, we may further assume that $r'$ is even. In particular, by \cref{2-factor theorem}, $G'$ admits a 2-factorization. By grouping these 2-factors, we readily obtain a decomposition of $G'$ as $G'=G'_1\cup\dots\cup G'_t$ where each $G'_i$ is $r'_i$-regular, with $r'_i \in \frac{r'}{t} \pm t\leq 40\frac{d}{t^{4/3}}$. Finally, applying \cref{proposition-completion} to each of the regular graphs $R_1,\dots,R_t$, where $R_i:=W_i\cup G'_i$, finishes the proof.  
\end{proof}

We will obtain `enough' 1-factorizations by keeping track of the number of choices available to us at every step in the above proof.

\begin{proof}[Proof of \cref{main:counting}]
Suppose that $\lambda\leq d^{1-\varepsilon}$ and let $c=\varepsilon/10$. Now, fix $\epsilon > 0$. Throughout this proof, $\epsilon_1,\dots, \epsilon_4$ will denote positive quantities which go to $0$ as $d$ goes to infinity. By \cref{rmk:counting}, there are at least $\left((1-\epsilon_1)\frac{t}{2}\right)^{nd/2}$ edge partitionings of $E(G)$ satisfying the conclusions of \cref{lemma-almost-factorization-technical} with $\alpha = 1/10$, $c$, and $t$ an odd integer in $[d^{c/100}, d^{c/10}]$. For any such partitioning $E(G)=E_1 \cup \dots \cup E_t$, the argument in the proof of \cref{main:pseudorandom} provides a decomposition $E(G)=E(R_1)\cup\dots\cup E(R_t)$. Recall that for all $i\in[t]$, $R_i:=W_i\cup G'_i$, where $W_i$ is an $(\alpha,r,n/2)$-good graph with $r\geq \lfloor\frac{d}{t}\left(1-\frac{16}{t^{1/3}}\right) \rfloor-1$ and $E(W_i)\subseteq E_i$, and $G'_i$ is an $r'_i$-regular graph with $r'_i \leq 40d/t^{4/3}$. In particular, by \cref{rmk:counting-completion}, each $R_i$ has at least $\left((1-\epsilon_2)\frac{d}{te^{2}}\right)^{nd/2t}$ $1$-factorizations. It follows that the multiset of $1$-factorizations of $G$ obtained in this manner has size at least $\left((1-\epsilon_3)\frac{d}{2e^2}\right)^{nd/2}$.     

To conclude the proof, it suffices to show that no $1$-factorization $\mathcal{F}=\{F_1,\dots,F_d\}$ has been counted more than $\left(1+\epsilon_4\right)^{nd/2}$ times above. Let us call an edge partitioning $E(G)=E_1\cup\dots\cup E_t$ \emph{consistent} with $\mathcal{F}$ if $E(G)=E_1\cup \dots \cup E_t$ satisfies the conclusions of \cref{lemma-almost-factorization-technical}, and $\mathcal{F}$ can be obtained from this partition by the above procedure.
It is clear that the multiplicity of $\mathcal{F}$ in the multiset is at most the number of edge partitionings consistent with $\mathcal{F}$, so that it suffices to upper bound the latter.  
For this, note that at least $d - 57d/t^{1/3}$ of the perfect matchings in $\mathcal{F}$ must have all of their edges in the same partition $E_i$. Thus, the number of edge partitionings consistent with $\mathcal{F}$ is at most ${d \choose 57d/t^{1/3}}\left(\frac{t}{2}+t^{2/3}\right)^{57nd/2t^{1/3}}d^{t}$, and observe that this last quantity can be expressed as $\left(1+\epsilon_4\right)^{nd/2}$.
\end{proof}

\section{Concluding remarks and open problems}
\label{section-concluding-rmks}

\begin{itemize}
\item In \cref{main:pseudorandom}, we proved that every $(n,d,\lambda)$-graph contains a $1$-factorization, assuming that $\lambda \leq d^{1-\varepsilon}$ and $d_0\leq d\leq n-1$ for $d_0$ sufficiently large. As we mentioned after the statement, it seems reasonable that one could, by following our proof scheme with a bit more care, obtain a bound of the form $\lambda \leq d/\log^cn$. In \cite{KS}, Krivelevich and Sudakov showed that if $d-\lambda\geq 2$ (and $n$ is even) then every $(n,d,\lambda)$-graph contains a perfect matching (and this, in turn, was further improved in \cite{CGH}). This leads us to suspect that our upper bound on $\lambda$ is anyway quite far from the truth. It will be very interesting to obtain a bound of the form $\lambda \leq d -C$, where $C$ is a constant, or even one of the form $\lambda \leq \varepsilon d$, for some small constant $\varepsilon$. Our proof definitely does not give it and new ideas are required. 
\item In \cite{KW}, Kim and Wormald showed that for every fixed $d\geq 4$, a typical $G_{n,d}$ can be decomposed into perfect matchings, such that for `many' prescribed pairs of these matchings, their union forms a Hamilton cycle (in particular, one can find a Hamilton cycle decomposition in the case that $d$ is even). Unfortunately, our technique does not provide us with any non trivial information about this kind of problem, but we believe that a similar statement should be true in $G_{n,d}$ for all $d$.
\item In \cref{main:counting}, we considered the  problem of counting the number of $1$-factorizations of a graph. We showed that the number of $1$-factorizations in $(n,d,\lambda)$-graphs is at least 
$$\left((1-o_d(1))\frac{d}{2e^2}\right)^{nd/2},$$ which is off by a factor of $2^{nd/2}$ from the conjectured upper bound (see \cite{LL}), but is still better than the previously best known lower bounds (even in the case of the complete graph) by a factor of $2^{nd/2}$. 
In an upcoming paper, together with Sudakov, we have managed to obtain an optimal asymptotic formula for the number of $1$-factorizations in $d$-regular graphs for all $d\geq \frac{n}{2}+\varepsilon n$. It is not very unlikely that by combining the techniques in this paper and the one to come, one can obtain the same bound for $(n,d,\lambda)$-graphs, assuming that $d$ is quite large (at least $\log^Cn$). It would be nice, in our opinion, to obtain such a formula for all values of $d$.
\item A natural direction would be to extend our results to the hypergraph setting. That is, let $H^k_{n,d}$ denote a $k$-uniform, $d$-regular hypergraph, chosen uniformly at random among all such hypergraphs. For which values of $d$ does a typical $H^k_{n,d}$ admit a $1$-factorization? How many such factorizations does it have? Quite embarrassingly, even in the case where $H$ is the complete $k$-uniform hypergraph, no non-trivial lower bounds on the number of $1$-factorizations are known. Unfortunately, it does not seem like our methods can directly help in the hypergraph setting. 

\end{itemize}


\begin{thebibliography}{10}

\bibitem{AlonSpencer} N. Alon, J. Spencer, The Probabilistic Method,
	3rd ed., John Wiley and Sons (2008).

\bibitem{Anstee} R.P. Anstee, \emph{The network flows approach for matrices with given row and column sums}, Discrete Mathematics 44.2 (1983): 125--138.

\bibitem{Arora} S. Arora, S.A. Khot, A. Kolla, D. Steurer, M. Tulsiani and N.K. Vishnoi, \emph{Unique games on expanding constraint graphs are easy}, In Proceedings of the fortieth annual ACM symposium on Theory of computing, ACM (2008): 21--28.

\bibitem{Bol}
B.~Bollob{\'a}s, \emph{Random graphs}, no.~73, Cambridge University Press,
(2001).

\bibitem{Bregman} L. M. Br\'egman, \emph{Some properties of non-negative matrices and their permanents}, Sov. Mat. Dokl. (1973) 14, 945--949.

\bibitem{BFSU} A.Z. Broder, A.M. Frieze, S.Suen and E. Upfal,  \emph{Optimal construction of edge-disjoint paths in random graphs}, SIAM Journal on Computing 28, no. 2 (1998): 541--573.

\bibitem{BH} A.E. Brouwer, and W.H. Haemers, \emph{Spectra of graphs}, Springer Science \& Business Media (2011).


\bibitem{Cameron} P.J. Cameron, \emph{Parallelisms of complete designs}, Vol. 23. Cambridge University Press (1976).

\bibitem{CGH} S.M. Cioabă, D.A. Gregory, and W.H. Haemers, \emph{Matchings in regular graphs from eigenvalues}, Journal of Combinatorial Theory, Series B 99.2 (2009): 287--297.

\bibitem{CGJ} N. Cook, L. Goldstein, and T. Johnson, \emph{Size biased couplings and the spectral gap for random regular graphs}, The Annals of Probability 46.1 (2018): 72--125.

\bibitem{CKLOT} B. Csaba, D. K\"uhn, A. Lo, D. Osthus, and A. Treglown, \emph{Proof of the $1$-factorization and Hamilton decomposition conjectures}, Memoirs of the American Mathematical Society, 244 (2016), monograph 1154.

\bibitem{Egorychev}
G. Egorychev, \emph{The solution of the Van der Waerden problem for
permanents}, Dokl. Akad. Nauk SSSR (1981) 258, 1041--1044.

\bibitem{Falikman}
D. Falikman, \emph{A proof of the Van der Waerden problem for
permanents of a doubly stochastic matrix}, Mat. Zametki (1981) 29,
931--938.





\bibitem{GDP} S. Gandham, M. Dawande and R. Prakash, \emph{Link scheduling in wireless sensor networks: Distributed edge-coloring revisited}, Journal of Parallel and Distributed 
Computing 68.8 (2008): 1122-1134.

\bibitem{HKK} P. Haxell. M. Krivelevich and G. Kronenberg, \emph{Goldberg's Conjecture is True for Random Multigraphs},  arXiv:1803.00908, (2018). 

\bibitem{Hoeff} W. Hoeffding, \emph{Probability inequalities for sums of bounded random variables}, Journal of the American statistical association 58.301 (1963): 13--30.

\bibitem{Hoy} I. Holyer, \emph{The NP-completeness of edge-coloring}, SIAM Journal on Computing, 10 (4) (1981): 718-–720.

\bibitem{HLW} S. Hoory, N. Linial and A. Wigderson, \emph{Expander graphs and their applications}, Bulletin of the American Mathematical Society 43.4 (2006): 439--561.

\bibitem{Jans} S. Janson, \emph{Random regular graphs: asymptotic distributions and contiguity}, Combinatorics, Probability and Computing 4, no. 4 (1995): 369--405.

\bibitem{JSV} M. Jerrum, A. Sinclair and E. Vigoda, \emph{A polynomial-time approximation algorithm for the permanent of a matrix with nonnegative entries}, Journal of the ACM (JACM) 51.4 (2004): 671-697.

\bibitem{KW} J. H. Kim and N. C. Wormald, \emph{Random matchings which induce Hamilton cycles and Hamiltonian decompositions of random regular graphs}, Journal of Combinatorial Theory, Series B, 81, no. 1 (2001): 20--44.



\bibitem{KS} M. Krivelevich and B. Sudakov, \emph{Pseudo-random graphs}, More sets, graphs and numbers (2006): 199--262.

\bibitem{Leven} D. Leven and Z. Galil, \emph{NP completeness of finding the chromatic index of regular graphs}, Journal of Algorithms 4.1 (1983): 35--44.

\bibitem{LL} N. Linial and Z. Luria, \emph{An upper bound on the number of Steiner triple systems}, Random Structures \& Algorithms 43, no. 4 (2013): 399--406.


\bibitem{Lynch} N.A. Lynch, \emph{Upper bounds for static resource allocation in a distributed system}, J. Comput. Syst. Sci. 23.2 (1981): 254-278.

\bibitem{MR} E. Mendelsohn and A. Rosa, \emph{One-factorizations of the complete graph--A survey}, Journal of Graph Theory, no. 1 (1985): 43--65.

\bibitem{Mirsky} L. Mirsky, \emph{Combinatorial theorems and integral matrices}, Journal of Combinatorial Theory 5.1 (1968): 30--44.

\bibitem{MG} J. Misra and D. Gries, \emph{A constructive proof of Vizing’s theorem}, Information Processing Letters (1992).

\bibitem{M} M. Molloy, H. Robalewska, R. W. Robinson, and N. Wormald, \emph{1-factorizations of random regular graphs}, Random Structures and Algorithms 10, no. 3 (1997): 305--321.

\bibitem{Pet} J. Petersen, \emph{Die Theorie der regulären graphs}, Acta Mathematica 15.1 (1891): 193.


\bibitem{Schrijver} A. Schrijver, \emph{Counting 1-factors in regular bipartite graphs}, Journal of Combinatorial Theory, Series B 72, no. 1 (1998): 122--135.

\bibitem{SS} E. Shamir and B. Sudakov, \emph{Two-Sided, Unbiased Version of Hall's Marriage Theorem}, American Mathematical Monthly 124.1 (2017): 79-80.

\bibitem{T} K. Tikhomirov and P. Youssef, \emph{The spectral gap of dense random regular graphs}, arXiv:1610.01765.

\bibitem{Valiant} L.G. Valiant, \emph{The complexity of computing the permanent}, Theoretical computer science 8.2 (1979): 189-201.

\bibitem{Vizing} V. G. Vizing, \emph{On an estimate of the chromatic class of a p-graph} Diskret analiz 3 (1964), 25--30.

\bibitem{West} D.B. West, \emph{Introduction to graph theory}, Vol. 2. Upper Saddle River: Prentice hall, (2001).

\bibitem{Wormald} N. Wormald, \emph{Models of random regular graphs}, London Mathematical Society Lecture Note Series (1999): 239--298.

\bibitem{Zinoviev} D.V. Zinoviev, \emph{On the number of 1-factorizations of a complete graph}, Problemy Peredachi Informatsii 50, no. 4 (2014): 71--78.

\end{thebibliography}
\end{document}